\pgfplotsset{compat=1.11}
\numberwithin{equation}{section}
\newtheorem{theorem}{Theorem}[section]
\newtheorem*{main theorem}{Main Theorem}
\theoremstyle{definition}
\newtheorem*{definition}{Definition}
\newtheorem{remark}[theorem]{Remark}
\newtheorem{example}[theorem]{Example}
\newtheorem{proposition}[theorem]{Proposition}
\newtheorem{lemma}[theorem]{Lemma}
\newtheorem{corollary}[theorem]{Corollary}
\newtheorem{question}[theorem]{Question}
\newtheorem{conjecture}[theorem]{Conjecture}
\newtheorem*{theorem*}{Theorem}
\newcommand{\N}{\mathbb{N}}
\newcommand{\Z}{\mathbb{Z}}
\newcommand{\R}{\mathbb{R}}
\newcommand{\dimH}{\dim_H}
\newcommand{\dimBu}{\mathop{\overline{\dim}_B}}
\newcommand{\dimBl}{\mathop{\underline{\dim}_B}}
\newcommand{\mF}{\mathcal{F}}
\newcommand{\Freq}{\mathrm{Freq}}
\newcommand{\Cantor}{J}
\newcommand{\Dist}{\varkappa}
\newcommand{\Root}{\mathop{\mathrm{Root}}}
\newcommand{\bc}{\overline{c}}
\newcommand{\eps}{\varepsilon}
\newcommand{\osc}{\mathop{\mathrm{osc}}}
\newcommand{\prm}{\mathbf{a}}
\newcommand{\ha}{\tau}
\newcommand{\fa}{\alpha}
\title[Regularity of the Dimension]{On the Regularity of the Dimension of Cookie-Cutter-Like Sets}
\author{Victor Kleptsyn}
\thanks{VK was supported by ANR Gromeov (ANR-19-CE40-0007) and by Centre Henri Lebesgue (ANR-11-LABX-0020-01).}
\address{CNRS, IRMAR (UMR CNRS 6625), University of Rennes, France}
\email{victor.kleptsyn@univ-rennes.fr}
\author{Alexandro Luna}
\thanks{AL was supported by NSF grant DMS-2247966 (PI: A. Gorodetski)}
\address{Department of Mathematics, University of California, Irvine, CA 92697, USA}
\email{lunaar1@uci.edu}
\date{}
\begin{document}
\maketitle

\begin{abstract}
   We study the Hausdorff and box-counting dimensions of cookie-cutter-like sets formed by sequential dynamics of a finite number of expanding maps. Under some natural conditions, these dimensions turn out to be the minimum and maximum of the corresponding dimensions of the cookie-cutter sets generated by the individual expanding maps. In the case of one-parameter families of such systems, this provides a simple mechanism for producing non-differentiable fractal dimensions as functions of the parameter. This supports a conjecture that the Hausdorff dimension of the spectrum of a Sturmian Hamiltonian, in general, does not have to be differentiable as a function of the coupling constant. This is in drastic contrast to the analytic dependence of the dimensions of such spectra with quadratic irrational frequencies, e.g. the Fibonacci Hamiltonian, previously shown by M.~Pollicott.
\end{abstract}

\section{Introduction}

This paper is devoted to the study of the dimension of cookie-cutter-like sets and their dependence on the generating transformations. More specifically, we consider sets that are constructed using a sequence of expanding maps, chosen from a finite set. Under some reasonable conditions that affect only the sequence of indices, but do not depend on the maps themselves, the Hausdorff dimension of such cookie-cutter-like set turns out to be the minimum of the dimensions of the associated individual cookie-cutter sets. Under the same assumptions, the upper box counting dimension is equal to the maximum; see Theorem~\ref{t:main} below.

As the conditions depend only on the sequence of indices, but not on the maps involved, the conclusion generalizes to the setting when the maps depend on a parameter. This allows to obtain a simple mechanism of loss of differentiability of the dimension as a function of the parameter: the minimum (or the maximum) of differentiable functions can fail to be differentiable at any point where their graphs intersect. This statement is closely related to the non-differentiability result obtained in~\cite{MSU2013} for sequences of holomorphic maps depending on a sequence of parameters (that is a point of $\ell_{\infty}$ in their case). 

Our motivation comes from the study of the regularity of the dimension of the spectrum of a Sturmian Hamiltonian as a function of the coupling constant. In general, it is known that the Hausdorff and upper box-counting dimensions of such spectra are (locally) Lipschitz as functions of the coupling constant, in the regime when this constant is large enough, for any irrational frequency~\cite{LQW2014}. For the class of Sturmian Hamiltonians with frequencies that are quadratic irrationalities, the regularity is even known to be analytic (a corollary of hyperbolic results for the corresponding trace maps~\cite{C1986,DG2009,Ca2009}, joined with the analytic dependence results in the stationary hyperbolic setting~\cite{P2015}). However, as we explain in Section~\ref{s:discussion}, our results motivate the conjecture that for a general frequency (of bounded type), the dimension of such spectra may not even be differentiable as a function of the coupling constant. 

\subsection{Background}

Iterated function systems (IFSs for short) and their limit sets have been studied by many authors for a very long time. One standard setting is that one takes a finite number of contracting maps $(f_1,\dots,f_q)$ of a compact metric space $X$ to itself; then (see~\cite{Hutchinson}) there is a unique set $K$ such that 
\[
K=\bigcup_{i=1}^q f_i(K),
\]
and one can find this set by 
\[
K=\bigcap_n X_{(n)}, \, \text{ where } X_{(0)}
:=X, \quad X_{(n)}=\bigcup_{i=1}^q f_i(X_{(n-1)}), \quad n=1,2,\dots.
\]

If the maps $f_i$ are homeomorphisms onto their images, and their images $f_i(X)$ are pairwise disjoint, then the limit set is a Cantor set. Throughout this paper, we will assume that the set $X$, on which all these maps are defined, is the closed interval~$I$. (Note that some of the maps $f_i:I\to I$ might be orientation-reversing: we do not impose the preservation of orientation condition.)

The same setting admits another point of view, the one of cookie-cutter maps:
\begin{definition}
    Assume that $I_1,\dots,I_q\subset I$ are disjoint subintervals. A map $F:\bigcup_{i=1}^q I_i\to I$, for which all the restrictions $F:I_i\to I$ are expanding diffeomorphisms, is called a \emph{cookie-cutter map}. 
    The Cantor set 
    \begin{equation}\label{eq:C-F}
        \Cantor(F):= \left\{ x\in I \mid \forall n \quad F^n(x) \text{ is defined} \right\}
    \end{equation}
    is called the corresponding \emph{cookie-cutter set}. 
\end{definition}

The equivalence between the two settings comes from considering the cookie-cutter map~$F$, corresponding to an IFS $(f_1,\dots,f_q)$, given by
\[
F|_{I_i}= f_i^{-1}, \quad i=1,\dots, q;
\]
see Fig.~\ref{fig:cookie-cutter}.

\begin{figure}
    \centering
    \mbox{} \hfill 
    \includegraphics[width=0.3\linewidth]{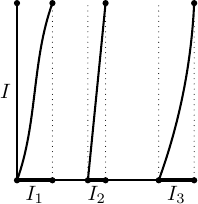} \hfill
    \includegraphics[width=0.3\linewidth]{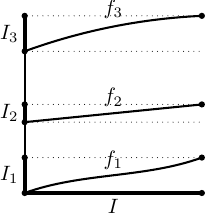}
    \hfill \mbox{}
    \caption{Left: cookie-cutter map~$F$; right: the maps $f_j$, forming the corresponding IFS, and the intervals $I_j=f_j(I)$.}
    \label{fig:cookie-cutter}
\end{figure}

A standard assumption in this setting, that we will be using throughout this paper, is that all the maps $f_i$ are of class~$C^{1+\ha}$ for some $\ha>0$. This assumption allows to conclude that the \emph{distortion} of all the compositions 
\[
f_{\omega}:=f_{\omega_1}\circ \dots \circ f_{\omega_n}, \quad \omega_1,\dots,\omega_n \in \{1,\dots,q\}
\]
is uniformly bounded. In particular, under this assumption, the Hausdorff, upper and lower box-counting  dimensions of the limit set~$K=\Cantor(F)$ coincide (see, e.g., \cite[Chapter~4]{PT1993} and references therein): 
\[
\dimH K= \dimBu K =\dimBl K.
\]
Given this equality, for cookie-cutter sets we will be denoting this dimension by simply~$\dim K$.

The study of cookie-cutters is also motivated by the study of stable laminations for basic hyperbolic invariant sets with one-dimensional unstable manifolds. Namely, one takes an intersection of a stable lamination with a transverse interval~$I$, then takes a sufficiently large iterate of it by the dynamics and then projects back to~$I$ by the holonomy along the stable lamination. Under reasonable assumptions, the Cantor set defined by the intersection of $I$ with the stable lamination of the hyperbolic set, is a $C^{1+\ha}$-dynamically defined Cantor set  
(again, see Chapter~4 of \cite{PT1993} for a discussion). 

\begin{figure}[h]
                \centering
\includegraphics[scale=0.9]{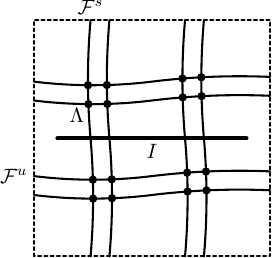} \quad 
\includegraphics[scale=0.9]{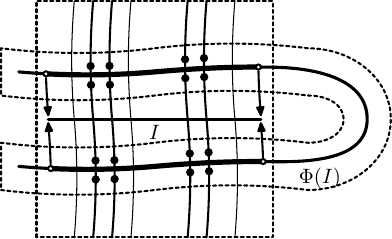}
                \caption{Left: basic set $\Lambda$ of a horseshoe map $\Phi$ with its stable and unstable laminations, and a transverse interval $I$; right: an iteration of this interval and its projection along the stable foliation that forms a cookie-cutter}\label{Figure: stable holonomy}
            \end{figure}

Our paper is devoted to the study of \emph{cookie-cutter-like} sets, corresponding to \emph{non-stationary} dynamics. Namely, instead of applying the same IFS (or the same expanding map) on each step, we fix a finite set $\mF=\{F_1,\dots,F_k\}$ of cookie-cutter maps, a sequence 
\[
\bc:=(c_n)_{n\in\N}\in\{1,\dots,k\}^{\N},
\]
and study the associated cookie-cutter-like Cantor set  defined by 
\begin{equation}\label{eq:non-st-Cantor}
\Cantor(\mF;\bc):= \{x\in I \mid \forall n \quad (F_{c_n}\circ\dots\circ F_{c_1})(x) \, \text{ is defined}\}.    
\end{equation}
When the set $\mF$ is already fixed, we will also denote this set by a shorter notation~$\Cantor_{\bc}$.

If $(f_{i,1},\dots,f_{i,q_i})$ is the IFS associated to $F_i$, this set can be also defined as the intersection 
\begin{equation}\label{eq:K-c-n}
\Cantor_{\bc}=\bigcap_n K_{c_1\dots c_n}, \quad K_{\emptyset}=I, \quad K_{c_1\dots c_n}= \bigcup_{i=1}^{q_{c_1}} f_{c_1,i}(K_{c_2\dots c_n}).    
\end{equation}
Such systems are considered, for instance, in~\cite{RU2016, MRW2001}. 

\subsection{Main results and plan of the paper}

Our main result, Theorem~\ref{t:main}, which we state in Section~\ref{s:rarely} below, provides a description of the Hausdorff and upper box-counting dimensions for cookie-cutter-like sets, associated to \emph{rarely switching} systems, satisfying the \emph{frequencies condition} (see the definition below). Applying it then in Section~\ref{s:parameter} to the study of ($C^2$ or even analytic) parameter-dependent systems, we see that such systems can manifest non-differentiable dependence of these dimensions on the parameter (contrary to what happens for stationary systems, where the dependence is known to be differentiable, see~\cite{R1982,Ma1990}).

This result, in a sense, is a ``proof of concept''; we further discuss the motivation for such study in Section~\ref{s:discussion}. Namely, cookie-cutter sets and their dimensions are related to the stable dimension of locally maximal hyperbolic invariant sets. In their turn, the latter occur in the study of the spectral properties of the quasi-periodic discrete Schr\"odinger operator with a Sturmian potential. Due to this link and analogy, the dependence of the dimension of the spectrum~$\Sigma_{\fa,\lambda}$ of such an operator on the \emph{coupling constant~$\lambda$} is known to be analytic when~$\fa$ is a quadratic irrationality. However, the arguments that we present in Section~\ref{s:discussion} allow to conjecture that (if all the statements generalize as we expect them to, and some additional arguments work) such a dependence can be non-differentiable even for frequencies~$\fa$ of a bounded type; see Conjecture~\ref{conj:quadratic} therein.

In Section~\ref{s:Moran}, we recall the Moran formula, thermodynamic formalism, and the distortion control technique. Once this is done, we pass to the proofs of the main results in Section~\ref{s:proofs}. The key moment of the proof is Proposition~\ref{p:combine}, which allows us to approximate a non-stationary pressure function of a rarely switching system, by averages of stationary ones. We state and prove it, and deduce from this proposition the main results. We conclude the paper by showing in Section~\ref{s:lower}, that the rare switching condition in Theorem~\ref{t:main} cannot be omitted; see Example~\ref{ex:HD-small} therein.

\subsection{Acknowledgments}

The authors are very grateful to Anton Goro\-detski for proposing the problem and encouraging discussions, and to Jake Fillman for many helpful remarks.

\section{Main results}
\subsection{Rarely switching systems}\label{s:rarely}

The first main results of this paper describe the Hausdorff and upper box-counting dimensions in the case that the sequence $\bc$ doesn't switch too often between values. Namely, let $\kappa_n$ be the number of switches up to the time $n$: 
    \[
        \kappa_n:=\# \{ m<n \mid c_m \neq c_{m+1} \}.
    \]
We will say that the sequence $\bc$ \emph{switches rarely}, if $\kappa_n=o(n)$.

Also, given the sequence $\bc$, we consider the frequencies, with which indices $j$ occur in this sequence,
\[
\Freq_{n,j}:=\frac{\#\{m\le n\mid c_m=j\}}{n};
\]
and we will say that the sequence $\bc$ satisfies the \emph{frequencies condition}, if 
\begin{equation}\label{eq:freq}
    \forall j=1,\dots,k \quad \limsup_{n\to\infty} \, \Freq_{n,j}=1,        
\end{equation}
where $k$ is the size of the set of systems~$\mF$.

\begin{example}
    The sequences~$\bc$ that satisfy both rarely switching and frequencies assumptions exist. For instance, the sequence 
    \[
            \bc=
            \underbrace{00\dots 0}_{l_1\text{ times}}\,
            \underbrace{11\dots 1}_{l_2\text{ times}}\,
            \underbrace{00\dots 0}_{l_3\text{ times}}\,
            \underbrace{11\dots 1}_{l_4\text{ times}}\dots
    \]
    satisfies both assumptions once $\frac{l_{j+1}}{l_j}\to\infty$ as $j\to\infty$.
\end{example}

\begin{theorem}\label{t:main}
    Let $\mF=\{F_1,\dots,F_k\}$ be a system of $C^{1+\ha}$ expanding cookie-cutter maps, where $\ha>0$. Assume that the sequence $\bc=(c_n)$ 
    is rarely switching, $\kappa_n=o(n)$, and
    satisfies the frequencies condition~\eqref{eq:freq}.
    Then the Hausdorff and upper box-counting dimensions of the associated non-stationary cookie-cutter-like set $\Cantor_{\bc}=\Cantor(\mF;\bc)$ are given by
    \begin{equation}\label{eq:dimH-min}
        \dimH \Cantor_{\bc} = \min_{j=1,\dots,k} \dim \Cantor(F_j),  
    \end{equation}
    \begin{equation}\label{eq:dimB-max}
        \dimBu\Cantor_{\bc}= \max_{j=1,\dots,k} \dim \Cantor(F_j).
    \end{equation}    
\end{theorem}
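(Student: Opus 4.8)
The plan is to control the non-stationary pressure function of the rarely switching system and squeeze it between the pressures of the individual stationary systems. Recall the standard machinery from the theory of cookie-cutters: for each $F_j$, the dimension $s_j := \dim \Cantor(F_j)$ is the unique zero of the stationary pressure function $P_j(s) = \lim_n \frac{1}{n}\log \sum_{|\omega|=n} \|(f_{j,\omega})'\|^s$ (a decreasing, analytic function of $s$), where the sum runs over admissible words for the IFS of $F_j$. For the non-stationary set $\Cantor_{\bc}$, the analogous object is a Moran-type sum $\Phi_n(s) = \sum \|(f_{c_1,\omega_1}\circ\cdots\circ f_{c_n,\omega_n})'\|^s$ over admissible words built from the sequence $\bc$; by the uniform bounded-distortion estimate (valid because all maps are $C^{1+\ha}$ and there are only finitely many of them), the Hausdorff dimension of $\Cantor_{\bc}$ is governed by $\liminf_n$ of the zeros of $\Phi_n$, and the upper box-counting dimension by $\limsup_n$ of these zeros. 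This reduction to a statement about zeros of the $\Phi_n$ is routine and is presumably exactly what Section~\ref{s:Moran} sets up; I would invoke it and then focus entirely on the combinatorics of the exponential sums.

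The heart of the argument, which I expect to be packaged as Proposition~\ref{p:combine}, is the following: because $\bc$ switches rarely ($\kappa_n = o(n)$), the word of length $n$ decomposes into $O(\kappa_n) = o(n)$ maximal constant blocks, and $\log \Phi_n(s)$ is, up to a bounded-distortion error that is uniform over all the block boundaries and hence contributes only $O(\kappa_n) = o(n)$ to $\log \Phi_n$, equal to the sum over blocks of the corresponding stationary log-sums. Concretely, if the block of symbol $j$ has length $\ell$, its contribution to $\frac{1}{n}\log\Phi_n(s)$ is approximately $\frac{\ell}{n} P_j(s)$ plus $o(1)$; summing, $\frac{1}{n}\log\Phi_n(s) = \sum_j \Freq_{n,j}\, P_j(s) + o(1)$, uniformly for $s$ in a compact interval containing all the $s_j$. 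This is the key "averaging of stationary pressures" step. I would prove the error bound by a direct telescoping estimate: concatenating admissible words across a block boundary only introduces one extra distortion factor per boundary, and each such factor is bounded above and below by constants depending only on $\mF$ and $\ha$.

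Given the approximation $\frac{1}{n}\log\Phi_n(s) \approx \sum_j \Freq_{n,j} P_j(s)$, the conclusion is soft. For the Hausdorff dimension: fix $j_0$ achieving the minimum $s_{j_0} = \min_j s_j$, and fix any $s > s_{j_0}$. Then $P_j(s) < P_j(s_{j_0}) \le 0$ wait — more carefully, $P_{j_0}(s) < 0$ and $P_j(s) \le P_j(s_j) = 0$ is not quite right since $s$ need not be $\le s_j$; instead use that each $P_j$ is decreasing, so $P_j(s)$ is bounded above by some constant, and along a subsequence where $\Freq_{n,j_0}\to 1$ (which exists by the frequencies condition~\eqref{eq:freq}) the average $\sum_j \Freq_{n,j}P_j(s) \to P_{j_0}(s) < 0$, forcing $\Phi_n(s)\to 0$ along that subsequence, hence $\dimH \Cantor_{\bc}\le s$; letting $s\downarrow s_{j_0}$ gives $\dimH\Cantor_{\bc}\le s_{j_0}$. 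For the reverse inequality, for $s < s_{j_0} = \min_j s_j$ we have $P_j(s) > 0$ for every $j$, so $\sum_j \Freq_{n,j}P_j(s) \ge \min_j P_j(s) > 0$ for all $n$, whence $\Phi_n(s)\to\infty$ and $\dimH\Cantor_{\bc}\ge s$. The upper box-counting statement is dual: for $s < \max_j s_j =: s_{j_1}$, picking a subsequence with $\Freq_{n,j_1}\to 1$ gives the average $\to P_{j_1}(s) > 0$, so $\dimBu\Cantor_{\bc}\ge s$; for $s > \max_j s_j$, all $P_j(s) < 0$ so the average is bounded away from $0$ above and $\Phi_n(s)\to 0$ for every $n$, giving $\dimBu\Cantor_{\bc}\le s$.

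The main obstacle, and the step deserving the most care, is making the bounded-distortion bookkeeping in Proposition~\ref{p:combine} genuinely uniform: I must ensure that the error in replacing $\log\Phi_n(s)$ by the block-wise sum of stationary pressures is $o(n)$ and not merely $O(n)$, which requires (i) that the per-boundary distortion factors are bounded by a single constant independent of the block lengths and of $n$, and (ii) that the finite-$n$ stationary log-sums $\log\sum_{|\omega|=\ell}\|(f_{j,\omega})'\|^s$ are within $o(\ell)$ — in fact within $O(1)$, again by bounded distortion and submultiplicativity — of $\ell\, P_j(s)$, with the $O(1)$ uniform in $\ell$ and in $s$ over a compact set. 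Both are standard for $C^{1+\ha}$ cookie-cutters, but the non-stationary concatenation means the distortion lemma has to be stated for mixed compositions $f_{c_1,\omega_1}\circ\cdots\circ f_{c_n,\omega_n}$; since $\mF$ is finite, the Hölder constants can be taken uniform, and the usual geometric-series argument for bounded distortion goes through verbatim. Once this uniformity is secured, everything else is the elementary convex-combination argument above together with the two defining properties of $\dimH$ and $\dimBu$ in terms of $\liminf$ and $\limsup$ of the Moran zeros.
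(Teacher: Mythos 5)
Your proposal is correct and follows essentially the same route as the paper: it reduces to the characterization of $\dimH$ and $\dimBu$ as $\liminf$/$\limsup$ of zeros of the non-stationary pressure (the result cited from~\cite{MRW2001}), proves the key approximation $\frac1n\log\Phi_n(s)=\sum_j\Freq_{n,j}P_{F_j}(s)+o(1)$ via bounded-distortion quasi-additivity at the $o(n)$ block boundaries, and then combines monotonicity of the $P_{F_j}$ with the frequencies condition to locate the roots. The only (cosmetic) difference is that the paper converts the pressure approximation into a quantitative bound on the roots via a uniform slope estimate and then checks the two inequalities on the roots directly, whereas you argue about the sign of the averaged pressure at a fixed $s$ and then let $s$ tend to the extremal dimension --- the same content in a slightly different order.
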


\begin{remark}
    In the particular case, when all the maps forming each of the cookie-cutters~$F_j$ are \emph{affine}, the rarely switching condition~$\kappa_n=o(n)$ can be omitted. On the other hand, this assumption cannot be omitted in general, as we will see from Example~\ref{ex:HD-small} below. Also, note that the applicability of Theorem~\ref{t:main} can be sometimes extended by grouping letters into groups. For instance, the sequence 
    \[
      \bc = (00)^{l_1} (11)^{l_2} (01)^{l_3} \dots 
      (00)^{l_{3m+1}} (11)^{l_{3m+2}} (01)^{l_{3m+3}}, \dots
    \]
    (where the powers represent the number of repetitions and the products represent the concatenation of words),
    used in Example~\ref{ex:HD-small} below, does not satisfy the rarely switching assumption, but it does satisfy it once we group letters into pairs. Finally, as we will see at the end of Section~\ref{s:proofs} (see Proposition~\ref{p:frequencies} therein), the frequencies condition can be omitted, at the cost of conclusions~\eqref{eq:dimH-min} and~\eqref{eq:dimB-max} becoming more complicated, involving lower and upper limits respectively.
\end{remark}

\subsection{Implications for parameter-dependent systems}\label{s:parameter}

Consider now families of cookie-cutter maps/IFSs that depend on a parameter~$\prm$:
\[
\mF^{\prm}=\{F^{\prm}_1,\dots,F^{\prm}_k\}.
\]

From D.~Ruelle~\cite{R1982}, we know that the dimension of cookie-cutter sets of analytic cookie-cutter maps, depending analytically on a parameter, depend analytically on the same parameter. 
\begin{theorem}[D. Ruelle,~{\cite[Corollary~5]{R1982}}]\label{t: Ruelle}
    Let $J^\prm\subset [0,1]$ be the cookie-cutter set for a real analytic cookie-cutter map $F^\prm$, depending analytically on the parameter~$\prm$. Then, the Hausdorff dimension of $J^\prm=J(F^\prm)$ depends analytically on~$\prm$.
\end{theorem}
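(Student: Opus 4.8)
The plan is to realize $\dimH J^{\prm}$ as the solution of \emph{Bowen's equation} and then apply the analytic implicit function theorem. Let $f^{\prm}_1,\dots,f^{\prm}_q$ be the inverse branches of $F^{\prm}$, and for real $s$ let $P(s,\prm)$ denote the topological pressure of the potential $-s\log|(F^{\prm})'|$ over the cookie-cutter dynamics. The Moran--Bowen formula (recalled in Section~\ref{s:Moran}) gives that $\dimH J^{\prm}$ is the unique $s=s(\prm)$ with $P(s(\prm),\prm)=0$. So it is enough to show: (i) $P$ extends to a holomorphic function of $(s,\prm)$ near every real point; and (ii) $\partial_s P<0$. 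Then the analytic implicit function theorem applied to $P(s,\prm)=0$ shows that $\prm\mapsto s(\prm)$ is analytic, and uniqueness of the zero identifies it with $\dimH J^{\prm}$.

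For (i) I would use the Ruelle transfer operator
\[
(\mathcal{L}_{s,\prm}\psi)(x)=\sum_{i=1}^{q}\bigl|(f^{\prm}_i)'(x)\bigr|^{s}\,\psi\bigl(f^{\prm}_i(x)\bigr).
\]
Since $F^{\prm}$ is real-analytic and uniformly expanding, the branches $f^{\prm}_i$ extend to holomorphic maps on a fixed complex neighborhood $U$ of $I$ that send $\overline{U}$ strictly into $U$; consequently $\mathcal{L}_{s,\prm}$ acts on a suitable space of holomorphic functions on $U$ (e.g.\ the Bergman space $A^2(U)$), is nuclear (trace-class), and depends analytically on $(s,\prm)$, extending holomorphically to complex $s$ and complex $\prm$. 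Positivity of $\mathcal{L}_{s,\prm}$ for real $s$ together with a Perron--Frobenius argument shows that its leading eigenvalue $\lambda(s,\prm)=e^{P(s,\prm)}$ is simple and isolated in the spectrum; Kato's analytic perturbation theory then makes $\lambda$, and hence $P=\log\lambda$, holomorphic near each real $(s,\prm)$.

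For (ii), differentiating the pressure gives $\partial_sP(s,\prm)=-\int\log|(F^{\prm})'|\,d\mu_{s,\prm}$, where $\mu_{s,\prm}$ is the equilibrium state of $-s\log|(F^{\prm})'|$ for $F^{\prm}$. Because $F^{\prm}$ is uniformly expanding, $\log|(F^{\prm})'|$ is bounded below by a positive constant, so this derivative is strictly negative; this also re-proves uniqueness of the zero $s(\prm)$. Combining (i) and (ii), the hypotheses of the analytic implicit function theorem hold at $(s(\prm_0),\prm_0)$ for every parameter value $\prm_0$, and therefore $\dimH J^{\prm}=s(\prm)$ is analytic in $\prm$.

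The main obstacle is step (i): choosing a function space on which the transfer operator is simultaneously bounded, holomorphic in the parameters, and equipped with a spectral gap (isolated, simple leading eigenvalue). This is precisely where real-analyticity of $F^{\prm}$ is indispensable --- for merely $C^{1+\ha}$ maps the transfer operator on H\"older functions is only quasi-compact, which suffices for differentiability of the pressure but not obviously for analyticity, and the clean route goes through the nuclear-operator (holomorphic functions) picture. One must also verify that the neighborhood $U$ and the ``$\overline{U}$ into $U$'' property can be chosen uniformly for $\prm$ near $\prm_0$, which follows from compactness and the continuous dependence of $F^{\prm}$ on $\prm$.
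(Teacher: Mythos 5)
The paper does not prove this statement; it is a citation of Ruelle's Corollary~5 in \cite{R1982}, stated here for context. Your outline---Bowen's equation $P(s,\prm)=0$, holomorphic extension of the pressure via the nuclear transfer operator on a space of holomorphic functions, simplicity of the leading eigenvalue, Kato perturbation theory, and the analytic implicit function theorem with $\partial_s P<0$---is exactly the route Ruelle takes in the cited reference, and it is correct. One technical point worth flagging explicitly: for complex $s$ the weight $|(f^{\prm}_i)'(x)|^s$ is not holomorphic in $x$, so one must replace it by a holomorphic branch of $((f^{\prm}_i)'(z))^s$ (possible since each branch is orientation-definite and the derivative is bounded away from zero on the complex neighborhood $U$); with that adjustment the nuclear-operator argument goes through as you describe.
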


 However, even though such an analytic dependence takes place for individual (stationary) cookie-cutter sets, we see that it might not take place for the associated (non-stationary) cookie-cutter-like sets. Namely, applying Theorem~\ref{t:main} individually at each parameter~$\prm$, we get

\begin{corollary}\label{c:param}
    Assume that a family $\mF^{\prm}$ of parameter-dependent cookie-cutter sets satisfies the assumptions of Theorem~\ref{t:main} for each parameter~$\prm$, and that the sequence $\bc=(c_n)$ satisfies the rarely switching condition~$\kappa_n=o(n)$, as well as 
    the frequencies condition~\eqref{eq:freq}.
    Then for the cookie-cutter-like sets $\Cantor^{\prm}_{\bc}:=\Cantor(\mF^{\prm};\bc)$, one has
    \begin{equation}
        \dimH \Cantor^{\prm}_{\bc}= \min_{j=1,\dots,k}\dim \Cantor(F^{\prm}_j),
    \end{equation}
    \begin{equation}
        \dimBu \Cantor^{\prm}_{\bc} = \max_{j=1,\dots,k}\dim \Cantor(F^{\prm}_j).
    \end{equation}
\end{corollary}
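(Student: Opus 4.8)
The plan is to observe that Corollary~\ref{c:param} is nothing more than a parameterwise application of Theorem~\ref{t:main}, and that the only thing worth pointing out is \emph{why} this is legitimate. The crucial remark is that the two combinatorial hypotheses in Theorem~\ref{t:main} --- the rarely switching condition $\kappa_n=o(n)$ and the frequencies condition~\eqref{eq:freq} --- are properties of the index sequence $\bc$ alone: they involve neither the maps $F_j^{\prm}$ nor the parameter~$\prm$. Since, by assumption, $\bc$ is one fixed sequence, independent of~$\prm$, these two conditions hold once and for all, simultaneously for every value of the parameter.

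Concretely, the argument I would write is this. Fix an arbitrary parameter~$\prm$. By hypothesis the family $\mF^{\prm}=\{F^{\prm}_1,\dots,F^{\prm}_k\}$ satisfies the assumptions of Theorem~\ref{t:main}; in particular each $F^{\prm}_j$ is a $C^{1+\ha}$ expanding cookie-cutter map. Combined with the rarely switching and frequencies conditions on~$\bc$ noted above, all hypotheses of Theorem~\ref{t:main} are met for the pair $(\mF^{\prm},\bc)$, so the theorem yields
\[
\dimH \Cantor^{\prm}_{\bc} = \min_{j=1,\dots,k}\dim \Cantor(F^{\prm}_j),
\qquad
\dimBu \Cantor^{\prm}_{\bc} = \max_{j=1,\dots,k}\dim \Cantor(F^{\prm}_j).
\]
As $\prm$ was arbitrary, both identities hold for all parameters, which is exactly the assertion of the corollary.

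There is essentially no obstacle in the proof itself: the entire mathematical content sits in Theorem~\ref{t:main}, and the corollary merely records that, because the hypotheses split cleanly into a parameter-free combinatorial part and a part that is verified separately at each~$\prm$, letting the maps vary costs nothing. The point of stating it is what follows: once each $\dim \Cantor(F^{\prm}_j)$ is known to depend analytically (or, in the $C^2$ setting, smoothly) on~$\prm$ via Theorem~\ref{t: Ruelle} (resp.\ \cite{Ma1990,R1982}), the right-hand sides above become a minimum and a maximum of finitely many analytic functions, which can fail to be differentiable precisely where the relevant graphs cross --- but that analysis belongs to Section~\ref{s:parameter} and not to the proof of the corollary.
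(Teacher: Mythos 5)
Your proof is correct and matches the paper exactly: the paper introduces the corollary with the single phrase ``applying Theorem~\ref{t:main} individually at each parameter~$\prm$, we get,'' which is precisely the parameterwise application you spell out. Your added remark that the combinatorial hypotheses on $\bc$ are parameter-free is the right observation to make explicit, though the paper leaves it implicit.
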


Now, the minimum and maximum of two analytic functions lose differentiability at any point where their graphs have a transverse intersection (see Fig.~\ref{fig:dimensions}). We thus get an example of non-differentiable dependence of the Hausdorff and upper-box counting dimensions on the parameter:

\begin{theorem}\label{t:not-diff}
    Let the sequence~$\bc$ be as in Corollary~\ref{c:param}. 
    Assume that $F_1^{\prm}$, $F_2^{\prm}$ are two IFSs depending on a parameter~$\prm$, are $C^2$-smooth in both the space and parameter variables, and such that the graphs of the functions $\dim \Cantor(F^{\prm}_j), \, j=1,2$, intersect transversely at some parameter~$\prm_0$. 

    Then the dependence of Hausdorff and upper-box counting dimensions $\dimH$, $\overline{\dim}_B$ of the cookie-cutter-like set $\Cantor^{\prm}_{\bc}$ is not differentiable at $\prm_0$.
\end{theorem}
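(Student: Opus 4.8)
The plan is to reduce Theorem~\ref{t:not-diff} to Corollary~\ref{c:param} plus an elementary fact about minima and maxima of smooth functions. First I would check that the hypotheses of Corollary~\ref{c:param} are met at every parameter~$\prm$ in a neighborhood of~$\prm_0$: the sequence~$\bc$ is fixed and is assumed to be rarely switching and to satisfy the frequencies condition, while the $C^2$-smoothness in the space variable (in particular, $C^{1+\ha}$ with $\ha=1$) guarantees that each $F_j^{\prm}$ is a genuine $C^{1+\ha}$ expanding cookie-cutter map to which Theorem~\ref{t:main} applies. Hence Corollary~\ref{c:param} gives, for each~$\prm$ near~$\prm_0$,
\[
\dimH \Cantor^{\prm}_{\bc}=\min_{j=1,2}\dim\Cantor(F_j^{\prm}),\qquad
\dimBu \Cantor^{\prm}_{\bc}=\max_{j=1,2}\dim\Cantor(F_j^{\prm}).
\]

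Next I would invoke Ruelle's Theorem~\ref{t: Ruelle} — or rather the $C^2$ analogue of it, for which I would cite the differentiable version (e.g.~\cite{Ma1990}, also mentioned in the excerpt) — to conclude that each function $\prm\mapsto d_j(\prm):=\dim\Cantor(F_j^{\prm})$ is at least $C^1$ in a neighborhood of~$\prm_0$. Then the statement reduces to the purely calculus lemma: if $d_1, d_2$ are $C^1$ functions on an interval around~$\prm_0$ with $d_1(\prm_0)=d_2(\prm_0)$ but $d_1'(\prm_0)\neq d_2'(\prm_0)$ (transverse intersection of the graphs), then neither $\min(d_1,d_2)$ nor $\max(d_1,d_2)$ is differentiable at~$\prm_0$. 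This is immediate by computing one-sided derivatives: on one side of~$\prm_0$ the min agrees with $d_1$ and on the other side with $d_2$, so the left and right derivatives of $\min(d_1,d_2)$ at~$\prm_0$ are $d_1'(\prm_0)$ and $d_2'(\prm_0)$ (in some order), which are unequal; the same for the max.

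The only genuinely delicate point is making sure Corollary~\ref{c:param} is applicable uniformly, i.e.\ that the min and max formulas hold not merely at~$\prm_0$ but at all parameters in a full neighborhood, since non-differentiability at a point is a statement about the behavior of the function near that point, not just its value there. This is automatic here because the rarely switching and frequencies conditions involve only the fixed index sequence~$\bc$, not the maps, so once they hold they hold for every~$\prm$; the smoothness and expansion of $F_j^{\prm}$ are open conditions and hence persist on a neighborhood of~$\prm_0$. I expect this bookkeeping, rather than any substantial analysis, to be the main thing to get right, together with one subtlety I would flag explicitly: the $C^2$ hypothesis is what lets us upgrade Ruelle's \emph{analytic} statement to a $C^1$ (in fact $C^{1+\text{H\"older}}$) statement on the dimension, which is all that is needed for the one-sided-derivative argument; genuine transversality $d_1'(\prm_0)\neq d_2'(\prm_0)$ then does the rest. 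Finally I would remark that the same argument shows the dependence is not even one-sided differentiable in a symmetric sense, and that intersecting graphs of analytic dimension functions do generically occur, for instance in one-parameter families interpolating between two cookie-cutters with different numbers of branches, so the hypothesis of Theorem~\ref{t:not-diff} is not vacuous.
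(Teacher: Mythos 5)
Your proposal is correct and matches the paper's own (implicitly sketched) argument: apply Corollary~\ref{c:param} on a neighborhood of~$\prm_0$, use the $C^1$-regularity of each $\prm\mapsto\dim\Cantor(F_j^{\prm})$ (for which, as you note, the paper's $C^2$ hypothesis requires the differentiable version of Ruelle's result, e.g.\ Ma\~n\'e~\cite{Ma1990}, rather than Theorem~\ref{t: Ruelle} verbatim), and then observe that the min and max of two $C^1$ functions with transversely intersecting graphs have mismatched one-sided derivatives at the crossing. The paper presents this only as a one-sentence remark preceding the theorem statement; your write-up is a correct elaboration of it, and the point you flag about needing the min/max identities on a full neighborhood (not just at $\prm_0$) is exactly the right piece of bookkeeping.
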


\begin{figure}[!h!]
    \centering
    \includegraphics[width=0.45\linewidth]{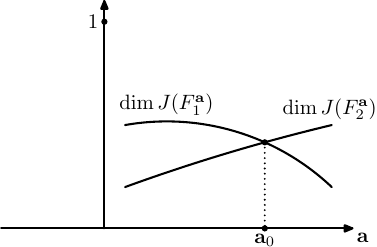} 
    \\
    \includegraphics[width=0.45\linewidth]{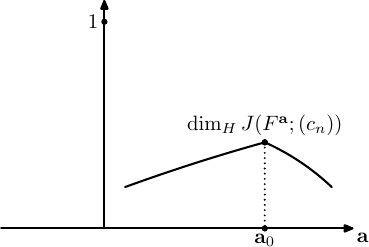} 
    \qquad 
    \includegraphics[width=0.45\linewidth]{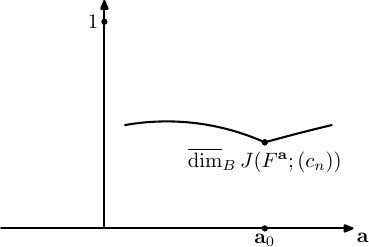} 
    \caption{Intersecting  graphs of the dimensions $\dimH(\Cantor(F_j^{\prm}))$ and the corresponding non-differentiable graphs of $\dimH$ and $\overline{\dim}_B$. }
    \label{fig:dimensions}
\end{figure}

\begin{remark}
    Slightly altering the above construction might lead to more non-differentiability points being created. Namely, if there are more than two systems, $F_1^{\prm},\dots,F_k^{\prm}$, then the graph of the minimum of their Hausdorff dimension is likely to have more non-differentiability points. Secondly, even if there are only two initial systems, $F_1^{\prm}$ and $F_2^{\prm}$, one can consider (in the same way as in Example~\ref{ex:F0-F1} below) their compositions of any fixed length, that adds new graphs to the list. For instance, using a sequence $\bc$ that satisfies rare switching and frequencies conditions after grouping letters into \emph{pairs} leads to the minimum of three graphs, those corresponding to the dimensions of $\Cantor(F^{\prm}_0)$, $ \Cantor(F^{\prm}_1)$ and $\Cantor(F^{\prm}_0 F^{\prm}_1).$
\end{remark}

\begin{question}
    How irregular can the graph of $\dim_H \Cantor^{\prm}_{\bc}$ be? Can it be non-differentiable on the right (or left) at some point? Can it have more than a countable number of non-differentiability points?
\end{question}

\section{Discussion: dimensions of the spectra of Sturmian Hamiltonians}\label{s:discussion}

Our motivation to study such (parameter-dependent) systems comes from the study of the spectra of Sturmian Hamiltonian operators. That is, the class of discrete Schr\"odinger operators, acting on $\ell_2(\Z)$ via 
\[
(H_{\lambda, \fa, x}u)_n:= u_{n+1}+u_{n-1}+ V_{\lambda, \fa, x}(n) u_{n}
\]
with a quasi-periodic potential 
\[
V_{\lambda, \fa, x}(n):=\lambda\cdot \chi_{[1-\fa, 1)}\left(x+n\fa \ (\text{mod} \ 1)\right),
\]
where the \textit{frequency} $\fa\in(0,1)$ is irrational, the \textit{coupling constant} $\prm:=\lambda$ is positive, and the \textit{phase}~$x$ is an element of $S^1$ (actually, the spectral properties of the operator are the same for Lebesgue-a.e. phase~$x$; see, e.g.~\cite[Chapter~4, Theorem~4.9]{DF2022}).

The properties of the spectrum $\Sigma_{\lambda,\fa}:=\sigma(H_{\lambda,\fa,x})$ of the Sturmian Hamiltonian have been extensively studied by many authors \cite{BBL2024, bist, C1986, Ca2009, CQ2025, DG2009, DG2011, DG2015, DGY2014, LW2004, LPW2007, LQW2014, M2014, L2025, Ra1997}; see also~\cite{DF2024} for a survey. One of the standard approaches here is to study the action of the corresponding \emph{renormalization maps}, acting on the corresponding \emph{Markov surface}~$S_{\lambda}$.

Namely, to every energy $E$ one puts in correspondence a point $p(E)$ of the Markov surface~$S_{\lambda}$, and a particular family $T_a:S_\lambda\to S_\lambda$, $a\in\mathbb N$, of renormalization diffeomorphisms. Now, given the decomposition of the frequency~$\fa$ in the continued fraction,
\[
\fa=[0;a_1,a_2,a_3,\dots],
\]
one considers the corresponding (non-stationary) sequence of renormalizations
\[
\Phi_n:=T_{a_n}\circ \dots \circ T_{a_1} : S_\lambda\to S_\lambda.
\]
The main connection is that an energy $E$ belongs to the spectrum $\Sigma_{\lambda,\fa}$ if and only if the orbit $(\Phi_n(p(E)))$ of the corresponding point $p(E)$ stays bounded~\cite{bist, D2000} (see also~\cite[Chapter~10, Theorem~10.5.2]{DF2024}).

For every $\fa$, one can consider the dimension $\dimH \Sigma_{\fa,\lambda}$ of the spectrum 
as a function of the coupling constant~$\lambda$. If $\fa$ is a quadratic irrationality (and hence its continued fraction is eventually periodic), the above bounded orbit criterion is reduced to the iterations of a single map. Such a renormalization map is known to be hyperbolic \cite{C1986, Ca2009, DG2009}. The spectrum is thus given by the intersection of the stable lamination of the maximal hyperbolic set $\Lambda_{\lambda}$ of this map with the transverse curve of initial conditions~$\{p(E)\}$. 

Meanwhile, the following result was established by M.~Pollicott~\cite{P2015}:

\begin{theorem}[M.~Pollicott,~{\cite{P2015}}]\label{t:Pollicott} Let M be a surface and $\Lambda\subset M$ a hyperbolic basic set for an analytic diffeomorphisms $f:M\rightarrow M$.
Suppose that $f_\prm:M\rightarrow M$, $f_0=f$, $\prm\in(-\epsilon, \epsilon)$ is a one-parameter family of analytic diffeomorphisms, depending analytically on $\prm$, with corresponding hyperbolic basic sets $\Lambda_\prm$, $\Lambda_0=\Lambda$, guaranteed by structural stability. Then, the stable dimension of $\Lambda_\prm$ is an analytic function of the parameter~$\prm$.
\end{theorem}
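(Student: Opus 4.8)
The plan is to express the stable dimension through a Bowen‑type equation and then establish \emph{joint} analyticity of the associated pressure function in the dimension variable and in the parameter, concluding by the implicit function theorem.

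First I would recall the thermodynamic description of the stable dimension. For a hyperbolic basic set $\Lambda_\prm$ of a surface diffeomorphism, the local stable manifolds are one‑dimensional and $f_\prm$ contracts along $E^s_\prm$ with a $C^{1+\ha}$ bounded‑distortion estimate, of exactly the kind underlying the theory of $C^{1+\ha}$ cookie‑cutters. By Bowen's equation in the form due to McCluskey--Manning and Ruelle, the Hausdorff dimension $s(\prm)$ of $W^s_{\mathrm{loc}}(x)\cap\Lambda_\prm$ (which does not depend on $x$) is the unique root in $t$ of
\[
P_\prm(t) := P_{f_\prm|_{\Lambda_\prm}}\big( t\,\psi_\prm \big) = 0, \qquad \psi_\prm(x) := \log \big\| Df_\prm|_{E^s_\prm(x)} \big\|,
\]
where $P$ is the topological pressure. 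The root is unique because $t\mapsto P_\prm(t)$ is strictly decreasing (since $\psi_\prm<0$, bounded away from $0$), with $P_\prm(0)=h_{\mathrm{top}}(f_\prm|_{\Lambda_\prm})>0$ and $P_\prm(t)\to-\infty$ as $t\to+\infty$. So it suffices to prove that $(\prm,t)\mapsto P_\prm(t)$ is real‑analytic near $(0,s(0))$, and then apply the implicit function theorem using $\partial_t P_\prm(t)=\int\psi_\prm\,d\mu_{t,\prm}<0$ at the root, where $\mu_{t,\prm}$ is the corresponding equilibrium state.

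The heart of the matter is the joint analyticity of the pressure. By structural stability there is a Markov partition for $f_0|_{\Lambda_0}$ whose combinatorics persist, so for all small $\prm$ the system $f_\prm|_{\Lambda_\prm}$ is coded by one and the same subshift of finite type $(\Sigma_A,\sigma)$; pulling $\psi_\prm$ back through the coding yields a potential $\widetilde\psi_\prm$ on $\Sigma_A$. One then studies the Ruelle transfer operators
\[
(\mathcal{L}_{t,\prm}\, g)(\underline x) = \sum_{\sigma(\underline y)=\underline x} e^{\,t\widetilde\psi_\prm(\underline y)}\, g(\underline y),
\]
for which the Ruelle--Perron--Frobenius theorem gives, at real $(t,\prm)$, a simple, isolated leading eigenvalue equal to $e^{P_\prm(t)}$. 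If $(t,\prm)\mapsto\mathcal{L}_{t,\prm}$ is analytic on a suitable Banach space, then analytic perturbation theory for an isolated simple eigenvalue (Kato) shows $e^{P_\prm(t)}$, hence $P_\prm(t)$, is jointly analytic; equivalently one may work with the Fredholm determinant $d(t,\prm;z)=\det\!\big(I-z\mathcal{L}_{t,\prm}\big)$, which is entire in $z$ and analytic in $(t,\prm)$, and read Bowen's equation as the analytic condition $d(t,\prm;1)=0$.

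The main obstacle, and the step requiring genuine care, is that the conjugacy furnished by structural stability between $\Lambda_0$ and $\Lambda_\prm$ is only H\"older, so one cannot deduce analyticity of $\widetilde\psi_\prm$ in $\prm$ by transporting through it. The way around this (following Ruelle's treatment of Julia sets and cookie‑cutters, and Pollicott's own argument) is to not transport through the conjugacy at all, but to build the transfer operator on the ``space side'', where everything is analytic: using the analyticity of $f_\prm$ and of its local stable and unstable manifolds in $(\prm,x)$, the relevant inverse branches extend holomorphically to a fixed complex neighborhood of the unstable leaves of the Markov pieces, and $\mathcal{L}_{t,\prm}$ acts on a Banach space of holomorphic functions there, in the style of Ruelle--Fried--Rugh, being nuclear and depending analytically on $(t,\prm)\in\mathbb{C}^2$ near the real point of interest. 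Combining this with the implicit function theorem above yields analyticity of $s(\prm)$, and the same argument with $\log\big\|Df_\prm^{-1}|_{E^u_\prm}\big\|$ handles the unstable dimension.
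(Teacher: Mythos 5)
This statement is quoted from Pollicott's paper \cite{P2015} purely as motivating background; the present paper does not supply a proof of it, so there is no internal argument to compare against. That said, your outline captures the strategy Pollicott in fact uses: reduce the stable dimension to Bowen's equation via McCluskey--Manning and Ruelle, code $f_\prm|_{\Lambda_\prm}$ by a single subshift through a persistent Markov partition, realize the pressure as $\log$ of the leading eigenvalue of a transfer operator acting on a Banach space of holomorphic functions on fixed complex tubular neighborhoods of the unstable arcs of the Markov rectangles (so that $\mathcal{L}_{t,\prm}$ is nuclear and depends analytically on the complexified $(t,\prm)$, and one may invoke Kato or the Fredholm determinant $\det(I-z\mathcal{L}_{t,\prm})$), and then close by the implicit function theorem using the strict monotonicity $\partial_t P_\prm(t)=\int\psi_\prm\,d\mu_{t,\prm}<0$. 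You also correctly identify the crucial pitfall: the conjugacy from structural stability is only H\"older, so one must not transport the potential through it, and the remedy --- build everything on the ``space side'' using holomorphically extending inverse branches --- is precisely the Ruelle--Fried--Rugh mechanism. The one place your sketch relies on something nontrivial without justifying it is the analytic (in both base point and parameter) dependence of the local stable/unstable manifolds of an analytic hyperbolic family; this requires a graph-transform or Cauchy-estimate argument in the analytic category, which Pollicott and the antecedent work of Ruelle and Rugh carry out. With that caveat noted, the proposal is sound and essentially tracks the cited proof.
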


Hence, the dimension of the spectrum $\Sigma_{\fa,\lambda}$, being the stable dimension of $\Lambda_{\lambda}$, depends analytically on the coupling constant $\lambda$ as a parameter. 

Also, it was shown in~\cite[Theorem~1.2]{DG2015} that for any $\lambda\ge 24$ there exist numbers $D_H(\lambda)$, $\overline{D}_B(\lambda)$ such that 
\[
\dimH \Sigma_{\fa,\lambda} = D_H(\lambda), \quad 
\dimBu \Sigma_{\fa,\lambda} = \overline{D}_B(\lambda)
\]
for Lebesgue-a.e. frequency~$\fa$. In \cite{CQ2025}, it was shown (see Theorem~1.1 therein) that $D_H(\lambda)=\overline{D}_B(\lambda)$, and it was conjectured (see Sec.~1.4 therein) that these almost-sure dimensions are analytic functions of~$\lambda$.

For a generic $\fa$, all the maps $T_a$ are believed to satisfy a common stable cone condition~\cite{GJK}, which should lead to the existence and reasonable regularity of the non-stationary stable foliation. This is currently being done for irrationals of bounded type~\cite{JL} in the same spirit as it has been done for Anosov maps of the two-torus satisfying a common cone condition~\cite{Luna1}.

This setting is thus parallel to cookie-cutter-like sets, generated by non-stationary compositions of maps, depending on a parameter.

Now, consider graphs of the dimensions of spectra $\dim_H \Sigma_{\fa,\lambda}$ for all possible quadratic irrationalities~$\fa$. As there is countably many of such irrationalities (even with a uniform bound on the elements of the continued fraction), it is highly plausible that the graphs will not stay disjoint from each other: 
\begin{conjecture}\label{conj:quadratic}
    There exist quadratic irrationalities $\fa_1, \fa_2$ such that the graphs of the corresponding dimension functions  
    $\lambda \mapsto \dim_H \Sigma_{\fa,\lambda}$
    (transversely) intersect at some point~$\lambda_0$.    
\end{conjecture}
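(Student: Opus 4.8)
The plan to attack Conjecture~\ref{conj:quadratic} is to work in a coupling regime where the full renormalization picture is available, and to extract a transverse crossing from the behaviour of the dimension at the two ends of the coupling range. Fix an interval $[\lambda_1,\infty)$ on which, for every quadratic irrationality $\fa$ with eventual continued-fraction period $[0;\overline{a_1,\dots,a_p}]$, the one-period renormalization $T_{a_1}\circ\dots\circ T_{a_p}\colon S_\lambda\to S_\lambda$ is a hyperbolic analytic diffeomorphism depending analytically on $\lambda$; securing such a common regime over all bounded-type frequencies is precisely the program announced in~\cite{GJK,JL}. On this regime $\dimH\Sigma_{\fa,\lambda}$ equals the stable dimension of the hyperbolic basic set $\Lambda_\lambda$, so by Pollicott's Theorem~\ref{t:Pollicott} the function $f_\fa(\lambda):=\dimH\Sigma_{\fa,\lambda}$ is real-analytic. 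Since two real-analytic functions on an interval either coincide identically or meet at isolated points — and distinct ones need not meet at all — the conjecture reduces to producing quadratic irrationalities $\fa_1\neq\fa_2$ for which $g:=f_{\fa_1}-f_{\fa_2}$ changes sign, together with a point where it vanishes to first order.

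The sign change is the crux. At large coupling the dimension is governed by a Moran-type equation for the hyperbolic set of the renormalization over one period, and is known (in the Fibonacci case, and expected in general) to admit an asymptotic expansion
\[
f_\fa(\lambda)=\frac{c(\fa)}{\log\lambda}+O\bigl((\log\lambda)^{-2}\bigr),\qquad \lambda\to\infty,
\]
in which the leading constant $c(\fa)$ is an explicit quantity built from the contraction rates of the branches of the renormalization over the period of the continued fraction of $\fa$. Choosing $\fa_1,\fa_2$ with $c(\fa_1)\neq c(\fa_2)$ — say the golden mean $[0;\overline 1]$ against $[0;\overline N]$ for a well-chosen $N$ — fixes the sign of $g$ for all large $\lambda$. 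For the reversal one compares the two functions at the lower end of the range: either using that $\lambda\mapsto\dimH\Sigma_{\fa,\lambda}$ is continuous on all of $(0,\infty)$ with $\dimH\Sigma_{\fa,\lambda}\to 1$ as $\lambda\to0^+$ at a frequency-dependent rate — so that $g$ has, near $\lambda=0$, the sign opposite to the one at $\lambda=\infty$ — or, staying inside the analytic regime, by a single explicit comparison at $\lambda=\lambda_1$ obtained from sharp Moran estimates or rigorous numerics. The intermediate value theorem then produces a crossing point $\lambda_0$.

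It remains to make a crossing transverse, i.e.\ to arrange a simple zero of $g$. A sign-change zero has odd order, and a non-simple ($\geq 3$) zero is a non-generic, infinite-codimension coincidence between the two analytic functions; one expects the crossing found above to already be simple, and if not, the abundance of the family helps: the quadratic irrationalities are countable and dense in continued-fraction parameter space, so there are infinitely many candidate pairs, and for at least one of them $g'(\lambda_0)\neq 0$ — forced by any quantitative lower bound on the difference of the rates of variation of the two dimensions at $\lambda_0$ (e.g.\ compatible with the $-c(\fa)/(\lambda(\log\lambda)^2)$ asymptotics of $f_\fa'$). This would establish Conjecture~\ref{conj:quadratic}; interleaving the periods of $\fa_1$ and $\fa_2$ in a rarely switching pattern then yields a bounded-type (non-quadratic) frequency $\fa$ whose renormalization sequence is exactly of the rarely switching, frequencies-condition type, and — granting the non-stationary stable foliation of~\cite{GJK,JL}, the bridge from cookie-cutter-like sets to Sturmian spectra — one obtains, precisely as in Theorem~\ref{t:not-diff} with $\Sigma_{\fa,\lambda}$ in place of $\Cantor^\prm_{\bc}$, non-differentiability of $\lambda\mapsto\dimH\Sigma_{\fa,\lambda}$ at $\lambda_0$.

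The main obstacle is neither the analyticity (Pollicott) nor the genericity of transversality, but the two analytic inputs behind the sign change: (i) establishing uniform hyperbolicity of the periodic renormalizations over a common coupling regime for all bounded-type frequencies, i.e.\ the program of~\cite{GJK,JL}; and (ii) computing the frequency-dependence of the leading constant $c(\fa)$, together with the small-coupling (or one intermediate) comparison, sharply enough to \emph{certify} that $g$ genuinely reverses sign rather than staying one-signed. The guiding heuristic — countably many non-constant analytic curves, all squeezed between the limiting values $1$ at $\lambda\to0^+$ and $0$ at $\lambda\to\infty$ and decaying there at genuinely different rates, are extremely unlikely to remain pairwise disjoint — is compelling, but the work is in turning one of the infinitely many expected crossings into an explicit, transverse one.
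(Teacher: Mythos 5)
This statement is labelled a Conjecture in the paper: no proof is offered there, only a one-sentence heuristic ("countably many analytic curves are highly unlikely to remain pairwise disjoint"). Your write-up is a more structured version of that same heuristic — Pollicott's analyticity, large-$\lambda$ asymptotics, a sign-reversal to force a crossing, and a genericity appeal for transversality — and, to your credit, you explicitly present it as a plan rather than a proof. So there is no discrepancy with the paper to report, but there are two places where the plan, taken at face value, does not close.

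First, the sign-reversal step is a genuine gap, not just an unverified input. Having $f_\fa(\lambda)=c(\fa)/\log\lambda + O((\log\lambda)^{-2})$ with $c(\fa_1)\neq c(\fa_2)$ pins down the ordering for large $\lambda$, but it does not force $g=f_{\fa_1}-f_{\fa_2}$ to change sign anywhere. Your first fallback — both dimensions tend to $1$ as $\lambda\to 0^+$ — does not help: two curves can both increase to $1$ while keeping the same order throughout, and in any case $\lambda\to 0^+$ lies outside the hyperbolicity regime where Theorem~\ref{t:Pollicott} applies, so even analyticity (or continuity) of $g$ across that gap is itself an open input. The second fallback, a certified numerical comparison at a single $\lambda_1$, is an honest description of how such a crossing might eventually be exhibited, but it is a plan for an argument rather than an argument.

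Second, the transversality step leans on a genericity intuition that does not apply here. A tangential intersection is indeed infinite-codimension among all pairs of analytic functions, but the $f_\fa$ form a fixed countable family — there is no free parameter to perturb — so "generically the zero is simple" has no force for any particular pair you exhibit, nor is it forced to hold for at least one pair just because there are infinitely many candidates. You correctly note that one would need a quantitative lower bound on $|f'_{\fa_1}(\lambda_0)-f'_{\fa_2}(\lambda_0)|$ at the actual crossing, but that is precisely the content of transversality, not a route to it. In short: the plan is compatible with the paper's motivation and adds useful detail (the role of $c(\fa)$, candidate frequencies, the reduction to a sign change plus a simple zero), but neither the sign reversal nor the simplicity of the zero is established, and you are right to list them among the obstacles rather than the conclusions.
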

 
This, together with the (conjectural) generalization of  Theorem~\ref{t:not-diff} to this jointly-hyperbolic setting, would lead to the following example, that would be in a drastic contrast with the analytic dependence known for quadratic irrationalities $\alpha$ and conjectured for Lebesgue-a.e. $\alpha$:
\begin{conjecture}
    There exists a frequency $\fa$ of bounded type, such that the corresponding dimension function $\lambda \mapsto \dim_H \Sigma_{\fa,\lambda}$
    is not differentiable at at least one point~$\lambda_0$.
\end{conjecture}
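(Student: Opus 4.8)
The plan is to obtain such a frequency by splicing together, in a rarely switching pattern obeying the frequencies condition, the (eventually periodic) continued fraction expansions of the two quadratic irrationalities furnished by Conjecture~\ref{conj:quadratic}, and then to invoke the jointly hyperbolic analogue of Theorems~\ref{t:main} and~\ref{t:not-diff}. Concretely, let $\fa_1,\fa_2$ be quadratic irrationalities, with eventual periods $(b_1,\dots,b_p)$ and $(d_1,\dots,d_r)$, whose dimension functions $g_i(\lambda):=\dim_H\Sigma_{\fa_i,\lambda}$ cross transversely at some $\lambda_0$, and set
\[
\fa:=\bigl[0;\,(b_1,\dots,b_p)^{l_1},\ (d_1,\dots,d_r)^{l_2},\ (b_1,\dots,b_p)^{l_3},\ \dots\bigr]
\]
with $l_{j+1}/l_j\to\infty$, the exponents denoting numbers of consecutive repetitions of the bracketed blocks. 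All partial quotients of $\fa$ lie in the finite set $\{b_1,\dots,b_p,d_1,\dots,d_r\}$, so $\fa$ is irrational of bounded type; and since the $l_j$ are strictly increasing, $\fa$ is not quadratic. Treating one full period as a single letter turns $\fa$ into a two-letter sequence $\bc\in\{1,2\}^{\N}$ which is rarely switching ($\kappa_n=o(n)$, as the number of period-blocks used by step $n$ is $o(n)$) and, because $l_{j+1}/l_j\to\infty$, satisfies the frequencies condition~\eqref{eq:freq}.

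Granting the extension of the present results to the Markov-surface setting, the argument proceeds as follows. In the hyperbolic coupling regime the renormalization maps $T_a:S_\lambda\to S_\lambda$ satisfy a common stable cone condition~\cite{GJK}, and — this is the step still in progress, cf.~\cite{JL,Luna1} — the non-stationary sequence $\Phi_n=T_{a_n}\circ\cdots\circ T_{a_1}$ then carries a stable lamination whose holonomies are $C^{1+\ha}$ with uniformly bounded distortion. By the bounded-orbit criterion~\cite{bist,D2000}, $\Sigma_{\fa,\lambda}$ is the intersection of the curve of initial conditions $\{p(E)\}$ with the stable lamination of the non-stationary hyperbolic set generated, in the order dictated by $\bc$, by the two maps $F_1:=T_{b_p}\circ\cdots\circ T_{b_1}$ and $F_2:=T_{d_r}\circ\cdots\circ T_{d_1}$ — a cookie-cutter-like set in the sense of~\eqref{eq:non-st-Cantor}, now living on the surface. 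Since $\bc$ is rarely switching and meets the frequencies condition, running the Moran-cover and thermodynamic argument of Section~\ref{s:proofs}, and in particular the pressure-approximation Proposition~\ref{p:combine}, in this two-dimensional situation yields
\[
\dim_H\Sigma_{\fa,\lambda}=\min\bigl(\dim_H\Sigma_{\fa_1,\lambda},\ \dim_H\Sigma_{\fa_2,\lambda}\bigr)=\min\bigl(g_1(\lambda),g_2(\lambda)\bigr).
\]
As $g_1,g_2$ are analytic by Pollicott's Theorem~\ref{t:Pollicott} and cross transversely at $\lambda_0$, their minimum fails to be differentiable there, which is the claim.

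The statement is thereby reduced to two ingredients: (i) Conjecture~\ref{conj:quadratic}, and (ii) the Markov-surface version of Theorems~\ref{t:main}--\ref{t:not-diff}. For (ii) the principal obstacle is the construction and $C^{1+\ha}$ regularity — with uniform distortion bounds — of the non-stationary stable foliation of $(\Phi_n)$ for bounded-type $\fa$; once this is in hand, the proofs of Section~\ref{s:proofs} should transfer with only cosmetic modifications, the transverse curve $\{p(E)\}$ playing the role of the interval $I$. For (i) one would combine the large-coupling asymptotics of $g_\fa(\lambda)$ for constant continued fractions $\fa=[0;\overline a]$, whose leading behaviour depends on $a$, with estimates (analytic, or at first numerical) at moderate coupling showing that the ordering of two such functions is reversed somewhere, so that — by analyticity and the fact that the two functions are not identically equal — a crossing must occur at an isolated point; the delicate point is to certify that at (at least) one such crossing the derivatives $\partial_\lambda g_{\fa_1}(\lambda_0)$ and $\partial_\lambda g_{\fa_2}(\lambda_0)$ genuinely differ, i.e. the intersection is transverse rather than tangential, which requires quantitative control of the joint dependence of the relevant Bowen equation on the energy and on $\lambda$. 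I expect (i), and specifically this transversality certification, to be the main difficulty.
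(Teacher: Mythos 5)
Since the statement is a conjecture, the paper does not prove it; Section~\ref{s:discussion} merely sketches the same reduction you give: splice the periodic continued-fraction blocks of two quadratic irrationalities in a rarely switching, frequencies-satisfying pattern, and invoke Conjecture~\ref{conj:quadratic} together with a (conjectural) Markov-surface analogue of Theorems~\ref{t:main}--\ref{t:not-diff}. Your sketch correctly fills in the explicit construction of $\fa$ and correctly identifies the two open ingredients (the transverse crossing and the non-stationary $C^{1+\ha}$ stable foliation with uniform distortion), so it matches the paper's intended argument essentially verbatim.
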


So far, it is only known that the Hausdorff and upper-boxing counting dimensions of $\Sigma_{\fa,\lambda}$ are Lipschitz in $\lambda$ over bounded intervals of $[24, \infty)$; see~\cite{LQW2014}. 

\section{Moran formula and thermodynamic formalism for cookie-cutters}\label{s:Moran}

For convenience, from now on we will assume that the initial interval~$I$ is $[0,1]$; this will allow us to avoid~$|I|$ in the denominators.

\subsection{Cookie-cutter sets}

For a single cookie-cutter map $F$, for which the corresponding maps $f_i$ are affine,
\[
 f_j(x) = a_j x + b_j, \quad j=1,\dots,q,
\]
the dimension of the corresponding cookie-cutter set $\Cantor(F)$ is given by the so-called~\emph{Moran formula} (see~\cite[Theorem~II]{Mo1946}): one has $\dimH \Cantor(F) = s$, where $s$ is the solution to the equation
\begin{equation}\label{eq:Moran}
    \sum_{j=1}^q |a_j|^s =1.
\end{equation}
This formula can be seen as an equality between the $s$-dimensional volume of the set $\Cantor(F)$ and the sum of volumes of its parts~$f_j(\Cantor(F))$.

When the maps $f_i$ are no longer affine, but only contracting $C^{1+\ha}$-diffeomorphisms, the dimension of the set $\Cantor(F)$ is given by the \emph{Bowen formula}. Namely, for every $n$ and every word 
\[
w=w_1\dots w_n\in \Omega_n:= \{1,\dots,q\}^n,
\]
one considers the interval 
\[
I_w:=f_{w_1}\circ \dots \circ f_{w_n}(I).
\]
The \emph{partition function} of $n$ iterates is then defined as 
\[
Z_{n,F}(s) := \sum_{w\in\Omega_n} |I_w|^s,
\]
and the associated $n$-iterates \emph{pressure function} as 
\[
P_{n,F}(s):=\frac{1}{n} \log Z_n(s).
\]
This sequence of functions converges to a continuous decreasing \emph{stationary pressure function} 
\[
    P_F(s):=\lim_{n\to\infty} P_{n,F}(s).
\]
The \emph{Bowen formula} then states (see~\cite[Lemma~10]{B1980}) that the dimension $s$ (both Hausdorff and box-counting) of the cookie-cutter set~$\Cantor(F)$ is given by the solution of the equation $P_F(s)=0$.

Note that for the case of a cookie-cutter map $F$ with affine maps $f_i$, one has $Z_{n,F}=Z_{1,F}^n$, and thus the pressure function $P_F(s)$ is simply the logarithm of the left hand side of Moran's formula~\eqref{eq:Moran}.

\subsection{Thermodynamic formalism for cookie-cutter-like sets}

Assume that we are given a finite set $\mF=
\{F_1,\dots,F_k\}$ of $C^{1+\ha}$-cookie-cutter maps and a sequence $\bc=(c_n)_{n\in\N}$, taking values in $\{1,\dots,k\}$. To these data corresponds a cookie-cutter-like set $\Cantor(\mF;\bc)$, defined by~\eqref{eq:non-st-Cantor}; its Hausdorff and box-counting dimensions are then determined (see \cite{MRW2001,MU2003,RU2016}) via a generalization of the Bowen formula. 

Namely, let $(f_{j,1},\dots, f_{j,q_j})$ be the IFS corresponding to the cookie-cutter map $F_j$. For any sequence $c_1,\dots, c_n\in\{1,\dots,k\}$ define
\[
\Omega_{c_1\dots c_n} := \left\{w=w_1\dots w_n \mid \forall j=1,\dots,n \quad w_j \in \{1,\dots,q_{c_j}\} \right\}
\]
and let 
\[
I_{w_1\dots w_n}^{c_1\dots c_n}:= f_{c_1,w_1}\circ \dots \circ f_{c_n,w_n}(I), \quad w_1\dots w_n\in \Omega_{c_1\dots c_n},
\]
be the intervals that are connected components of the associated set $K_{c_1\dots c_n}$ (see~\eqref{eq:K-c-n}).

One then defines~\cite{MRW2001,MU2003,RU2016} non-stationary versions of the partition and pressure functions by
\begin{equation}\label{eq:def-PF}
Z_{c_1\dots c_n}(s):=\sum_{w_1\dots w_n\in \Omega_{c_1\dots c_n}} |I_{w_1\dots w_n}^{c_1\dots c_n}|^s,    
\end{equation}
\[
P_{c_1\dots c_n}(s):=\frac{1}{n} \log Z_{c_1\dots c_n}(s).
\]
Note that (in the same way as for stationary cookie-cutters) as there is a finite number of contracting diffeomorphisms $f_{i,j}$, there exist constants $1<\lambda<L$, such that for all $(i,j)$, one has
\[
\forall x\in I \quad L^{-1}<Df_{i,j}(x)<\lambda^{-1}.
\]
This implies the uniform exponential bounds for the intervals 
\[
L^{-n} < |I_{w_1\dots w_n}^{c_1\dots c_n}| < \lambda^{-n};
\]
hence, each summand in the definition~\eqref{eq:def-PF} of the partition function satisfies the estimates
\begin{equation}    
\forall r>0, \,\forall s, \quad |I_{w_1\dots w_n}^{c_1\dots c_n}|^{s} L^{-nr} < |I_{w_1\dots w_n}^{c_1\dots c_n}|^{s+r} < |I_{w_1\dots w_n}^{c_1\dots c_n}|^{s} \lambda^{-nr};
\end{equation}
adding them up and taking the logarithm, one gets the uniform decrease and continuity estimates
\begin{equation}\label{eq:slope}
P(s)-(\log L)\cdot r <P(s+r)< P(s)-(\log \lambda)\cdot r.
\end{equation}

In particular, there exists a unique zero $s_{c_1\dots c_n}$ of the function $P_{c_1\dots c_n}(s)$, and it is easy to see that it belongs to $[0,1]$. Now, let
\begin{equation}
    s_*:=\liminf_{n\to\infty} s_{c_1\dots c_n} \quad \text{and} \quad s^*:=\limsup_{n\to\infty} s_{c_1\dots c_n}.
\end{equation}
 Then, the dimensions of the cookie-cutter-like set $\Cantor_{\bc}=\Cantor(\mF;(c_n))$ are given by the following theorem.
\begin{theorem}[{\cite[Theorem~1.3]{MRW2001}}]\label{t:dimensions-roots}
    In the notations above,
    \begin{equation}\label{eq:dim-formulae}
        \dimH \Cantor_{\bc}=s_* \quad \text{and} \quad 
        \dimBu \Cantor_{\bc}=s^*.
    \end{equation}
\end{theorem}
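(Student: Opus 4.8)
The plan is to deduce \eqref{eq:dim-formulae} from the four one-sided bounds $\dimH\Cantor_{\bc}\le s_*$, $\dimH\Cantor_{\bc}\ge s_*$, $\dimBu\Cantor_{\bc}\le s^*$, $\dimBu\Cantor_{\bc}\ge s^*$, treating the interval families $\{I^{c_1\dots c_n}_w\}$ as a Moran-type construction. First I would record the consequences of the uniform bounded-distortion estimate for $C^{1+\ha}$ compositions. The key one is \emph{quasi-multiplicativity}: $|I^{c_1\dots c_{m+n}}_{wv}|\asymp|I^{c_1\dots c_m}_w|\cdot|I^{c_{m+1}\dots c_{m+n}}_v|$ with a constant uniform in everything, so raising to the power $s$ and summing over $w,v$ gives $Z_{c_1\dots c_{m+n}}(s)\asymp Z_{c_1\dots c_m}(s)\cdot Z_{c_{m+1}\dots c_{m+n}}(s)$ uniformly in $m,n$ and $s\in[0,1]$. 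The second is \emph{bounded geometry}: for a scale $\delta$, the "stopping cylinders" $I_w$ with $|I_w|\le\delta<|I_{w^-}|$ (where $w^-$ is the parent of $w$) partition $\Cantor_{\bc}$, all have length in $(L^{-1}\delta,\delta]$, and any interval of length $\delta$ meets at most boundedly many of them. Finally, from the uniform slope bound \eqref{eq:slope} and $P_{c_1\dots c_n}(s_{c_1\dots c_n})=0$ one gets $P_{c_1\dots c_n}(s)\le-(\log\lambda)(s-s_{c_1\dots c_n})$ for $s>s_{c_1\dots c_n}$ and $P_{c_1\dots c_n}(s)\ge(\log\lambda)(s_{c_1\dots c_n}-s)$ for $s<s_{c_1\dots c_n}$; hence $Z_{c_1\dots c_n}(s)=e^{nP_{c_1\dots c_n}(s)}$ decays or grows exponentially in $n$ as soon as $s$ is separated from $s_{c_1\dots c_n}$.

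\emph{Upper bounds.} For $\dimH\Cantor_{\bc}\le s_*$, fix $s>s_*$; since $s_*=\liminf_n s_{c_1\dots c_n}$, there is a subsequence $n_j$ with $s_{c_1\dots c_{n_j}}\le\tfrac{s_*+s}{2}$, so $Z_{c_1\dots c_{n_j}}(s)\to0$ exponentially. The cylinders $\{I^{c_1\dots c_{n_j}}_w\}$ cover $\Cantor_{\bc}$ with diameters $\le\lambda^{-n_j}\to0$, whence $\mathcal H^s_{\lambda^{-n_j}}(\Cantor_{\bc})\le Z_{c_1\dots c_{n_j}}(s)\to0$ and $\mathcal H^s(\Cantor_{\bc})=0$. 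For $\dimBu\Cantor_{\bc}\le s^*$, fix $s>s^*$; then $s_{c_1\dots c_n}<s$ for all large $n$, so $Z_{c_1\dots c_n}(s)$ decays exponentially for all large $n$. Covering $\Cantor_{\bc}$ by the stopping cylinders at scale $\delta$, whose levels range over an interval of length $O(\log(1/\delta))$, yields $\sum_{\text{stopping}}|I_w|^s\le\sum_n Z_{c_1\dots c_n}(s)=o(1)$; as these cylinders have length $\asymp\delta$, their number is $O(\delta^{-s})$, and bounded geometry gives $N_\delta(\Cantor_{\bc})=O(\delta^{-s})$.

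\emph{Lower bounds.} For $\dimH\Cantor_{\bc}\ge s_*$, fix $s<s_*$; then $Z_{c_1\dots c_m}(s)$ is bounded below by a positive constant uniformly in $m$ (it grows exponentially for large $m$ and is positive for each of the finitely many small $m$). I would take a weak-$*$ subsequential limit $\mu$ of the probability measures distributing mass $|I_w|^s/Z_{c_1\dots c_n}(s)$ over each level-$n$ cylinder $I_w$; then $\mu$ is carried by $\Cantor_{\bc}$, and by quasi-multiplicativity $\mu(I_w)\asymp|I_w|^s\,Z_{c_{|w|+1}\dots c_n}(s)/Z_{c_1\dots c_n}(s)\asymp|I_w|^s/Z_{c_1\dots c_{|w|}}(s)\lesssim|I_w|^s$ for every cylinder (passing to the limit uses that cylinders of a fixed level are uniformly separated). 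With bounded geometry this gives $\mu(B(x,r))\lesssim r^s$, so the mass distribution principle forces $\mathcal H^s(\Cantor_{\bc})>0$. The remaining bound $\dimBu\Cantor_{\bc}\ge s^*$ is the subtle one: fix $s<s^*$ and a subsequence $n_j$ with $s_{c_1\dots c_{n_j}}>s$, so $Z_{c_1\dots c_{n_j}}(s)$ grows exponentially in $n_j$. Since the contraction rates of the $f_{j,i}$ need not be comparable, level-$n_j$ cylinders have widely varying sizes and no single scale resolves them all; I would use a \emph{dominant-scale} device: the level-$n_j$ cylinders fall into $O(n_j)$ dyadic size-classes, so some class, of common size $\delta_j\in[L^{-n_j},\lambda^{-n_j}]$, carries a fraction $\gtrsim 1/n_j$ of the weight $Z_{c_1\dots c_{n_j}}(s)$; its $N_j^*\asymp\delta_j^{-s}\,Z_{c_1\dots c_{n_j}}(s)/n_j$ disjoint members force $N_{\delta_j}(\Cantor_{\bc})\gtrsim N_j^*$, and since $\log(1/\delta_j)\le n_j\log L$ while $Z_{c_1\dots c_{n_j}}(s)$ grows exponentially in $n_j$, one gets $\log N_{\delta_j}/\log(1/\delta_j)\ge s+c$ for a fixed $c>0$ and all large $j$. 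Letting $s\uparrow s^*$ then gives $\dimBu\Cantor_{\bc}\ge s^*$, and together with the three earlier inequalities (each obtained by letting $s$ approach the relevant value) this yields \eqref{eq:dim-formulae}.

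\emph{Main obstacle.} The technical heart is the last inequality, $\dimBu\Cantor_{\bc}\ge s^*$: a naive count at scale $\lambda^{-n}$ or $L^{-n}$ loses a factor $L^{\Theta(n)}$ precisely because combinatorial level and metric scale decouple when the maps contract at incomparable rates, and the dominant-dyadic-scale trick (equivalently, a Moran construction at the typical scale of level $n_j$) is what repairs this — the bookkeeping being to verify that the chosen scales $\delta_j$ still tend to $0$ and that the exponential growth of $Z_{c_1\dots c_{n_j}}(s)$ beats the polynomial factor $n_j$. Everything else rests on quasi-multiplicativity of $Z$, which is routine given uniform distortion control but must be set up with care since the pressures $P_{c_1\dots c_n}$ do not converge in the non-stationary regime.
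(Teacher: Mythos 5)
The statement you are proving is one the paper does \emph{not} prove: it is quoted as \cite[Theorem~1.3]{MRW2001} and used as a black box, so there is no internal proof to compare against. Judged on its own, your sketch is a correct and reasonably complete reconstruction of the standard non-autonomous Moran-set argument; in particular you correctly isolate the two places where non-stationary $C^{1+\ha}$ constructions deviate from the classical self-similar picture (quasi-multiplicativity of $Z$ with a constant uniform in $m,n$, supplied in the paper by Lemma~\ref{l: almost sub-additive}, and the decoupling of combinatorial level from metric scale when contraction rates are not comparable). The two upper bounds and the mass-distribution lower bound for $\dimH$ are routine once quasi-multiplicativity and bounded geometry are in place, and your treatment of them is fine, including the remark that passing to the weak-$*$ limit requires the level-$m$ cylinders to be separated (so one can sandwich $\chi_{I_w}$ between continuous functions supported away from the other cylinders). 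The genuinely delicate point, $\dimBu\Cantor_{\bc}\ge s^*$, is handled correctly by the dominant-dyadic-scale pigeonhole; the only wording to tighten is that you should pick the subsequence with $s_{c_1\dots c_{n_j}}$ bounded \emph{above} $s$ by a fixed margin (e.g.\ $s_{c_1\dots c_{n_j}}\ge(s+s^*)/2$), so that the slope estimate~\eqref{eq:slope} actually yields \emph{exponential}, not merely $>1$, growth of $Z_{c_1\dots c_{n_j}}(s)$; as written, "a subsequence with $s_{c_1\dots c_{n_j}}>s$" only gives $Z>1$. For comparison with the source: \cite{MRW2001} organizes the lower bound for $\dimH$ by reducing to the Feng--Rao--Wu net-measure theorem for homogeneous Moran sets rather than building a Frostman measure directly, and treats the box dimension by bracketing with homogeneous Moran sets at comparable scales; your dominant-scale pigeonhole is a somewhat more self-contained substitute for that bracketing, at the cost of the $\log n_j$ bookkeeping you already flag. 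Net assessment: correct approach, one phrasing fix needed in the $\dimBu\ge s^*$ step, and not comparable to "the paper's proof" because the paper cites this result rather than proving it.
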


\subsection{Distortion control}

We conclude this section by recalling some standard general statements of the distortion control. Namely, given a $C^1$-diffeomorphism $f:I\to I'$, one defines its \emph{distortion} on $I$ as
\[
\Dist(f;I):=\osc_{x\in I} \, \log |Df(x)|.
\]
The distortion is composition-subadditive:
\[
\Dist(f_1\circ f_2;I) \le \Dist(f_2;I) + \Dist(f_1;f_2(I)).
\]
At the same time, for a diffeomorphism $f$ of class $C^{1+\ha}$, its distortion on a subinterval $I'\subset I$ can be estimated as
\[
\Dist(f;I') \le C_f |I'|^{\ha},
\]
for some constant~$C_f$. A standard corollary to these two facts is that the distortion of compositions of $C^{1+\ha}$-contracting diffeomorphisms stays uniformly bounded. Namely, consider the compositions that occur in the non-stationary IFSs describing the cookie-cutter-like sets, given by 
\[
f_{w_1\dots w_{n}}^{c_1\dots c_{n}}:=f_{c_1,w_1}\circ \dots \circ f_{c_n,w_n}.
\]
We then have the following estimate.
\begin{lemma}\label{l: bounded distortion}
    There exists a constant $C_{\Dist}$, such that for any $c_1,\dots,c_n$ and any word $w\in \Omega_{c_1\dots c_n}$, one has 
    \begin{equation}\label{eq:dist-bound}
        \Dist(f_{w_1\dots w_{n}}^{c_1\dots c_{n}}; I)   
        \le C_{\Dist}.        
    \end{equation}
\end{lemma}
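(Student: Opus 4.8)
The plan is to exploit the two properties of distortion recalled just above — composition-subadditivity and the Hölder bound $\Dist(f;I')\le C_f|I'|^{\ha}$ — together with the uniform exponential contraction $|I_{w_1\dots w_n}^{c_1\dots c_n}|<\lambda^{-n}$ coming from $Df_{i,j}<\lambda^{-1}$, to turn the telescoping sum into a convergent geometric series. First I would fix the finite constant $C:=\max_{i,j} C_{f_{i,j}}$, the maximum of the Hölder distortion constants over the finitely many contracting diffeomorphisms $f_{i,j}$ appearing in the systems $F_1,\dots,F_k$; this is legitimate precisely because $\mF$ is a finite family and each $q_j$ is finite.

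Next, I would write the composition $f_{w_1\dots w_n}^{c_1\dots c_n}=f_{c_1,w_1}\circ\bigl(f_{c_2,w_2}\circ\dots\circ f_{c_n,w_n}\bigr)$ and apply composition-subadditivity iteratively, peeling off one factor at a time from the left. This yields
\[
\Dist\bigl(f_{w_1\dots w_n}^{c_1\dots c_n};I\bigr)\le \sum_{m=1}^{n}\Dist\bigl(f_{c_m,w_m};\,f_{c_{m+1},w_{m+1}}\circ\dots\circ f_{c_n,w_n}(I)\bigr).
\]
The interval $f_{c_{m+1},w_{m+1}}\circ\dots\circ f_{c_n,w_n}(I)$ on which the $m$-th distortion is measured is the image of $I=[0,1]$ under a composition of $n-m$ contracting maps, hence has length at most $\lambda^{-(n-m)}$ (when $m=n$ the relevant interval is $I$ itself, of length $1$). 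Applying the Hölder bound to each term gives
\[
\Dist\bigl(f_{w_1\dots w_n}^{c_1\dots c_n};I\bigr)\le \sum_{m=1}^{n} C\,\lambda^{-(n-m)\ha}
= C\sum_{\ell=0}^{n-1}\lambda^{-\ell\ha}
\le \frac{C}{1-\lambda^{-\ha}}=:C_{\Dist},
\]
where the last bound uses $\lambda>1$, so $\lambda^{-\ha}<1$ and the geometric series converges. Since $C_{\Dist}$ does not depend on $n$, on the index sequence $c_1,\dots,c_n$, or on the word $w\in\Omega_{c_1\dots c_n}$, this is exactly~\eqref{eq:dist-bound}.

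There is no serious obstacle here: this is the standard bounded-distortion argument of Bowen/Ruelle (cf.\ \cite[Chapter~4]{PT1993}) adapted verbatim to the non-stationary setting, the adaptation being harmless because all the relevant constants ($C$ from the Hölder estimates, $\lambda$ from the contraction) are uniform over the finite family $\mF$. The only point that deserves a line of care is the bookkeeping in the iterated application of subadditivity — making sure the inner composition at step $m$ really has $n-m$ factors so that the contraction factor is $\lambda^{-(n-m)}$ and the exponents $\ell=n-m$ run over $0,1,\dots,n-1$ — but this is routine.
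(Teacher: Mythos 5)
Your proof is correct and follows essentially the same route as the paper's: iterate composition-subadditivity, bound each term by the $C^{1+\ha}$ Hölder estimate $\Dist(f;I')\le C_f|I'|^\ha$ together with $|I'|\le\lambda^{-\ell}$, and sum the resulting geometric series to get $C_\mF/(1-\lambda^{-\ha})$. If anything you are a touch more careful than the paper in identifying the interval $f_{c_{m+1},w_{m+1}}\circ\dots\circ f_{c_n,w_n}(I)$ on which the $m$-th distortion is measured (the paper's notation $|I_{w_1\dots w_m}|$ in its displayed sum is a harmless indexing shorthand for ``an image of $I$ under $m$ of the contractions''), but the estimate and the constant are identical.
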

\begin{proof}
    Indeed, denoting $C_{\mF}:=\max_{i,j} C_{f_{i,j}}$, we see that due to the subadditivity of the distortion, the left hand side of~\eqref{eq:dist-bound} doesn't exceed the sum of the distortions of the individual maps on the corresponding intervals. This sum can be estimated as
    \[
        \sum_{m=0}^{n-1} \max_{i,j} (C_{f_{j,i}}) \cdot |I_{w_1\dots w_m}|^{\ha} \le C_{\mF} \sum_{m=0}^{\infty} \lambda^{-\ha m}= \frac{C_{\mF}}{1-\lambda^{-\ha}}=:C_{\Dist},
    \]
    where we have used $|I_{w_1\dots w_m}|\le \lambda^{-m}$.
\end{proof}

\section{Proofs}\label{s:proofs}
\subsection{Controlling additivity: main proposition}

The following proposition states that in ``rare switching'' mode, the non-stationary pressure function can be approximated by a linear combination of the stationary ones:
\begin{proposition}\label{p:combine}
   If the sequence $(c_n)$ satisfies the rare switching condition $\kappa_n=o(n)$, then the difference
    \begin{equation}
        P_{c_1\dots c_n}(s)- \sum_{j=1}^k \Freq_{n,j} \cdot P_{F_j}(s),
    \end{equation}
    where $P_{F_j}(s)$ are the corresponding stationary pressure functions, converges to zero as $n\to\infty$, and for any compact interval $[a,b]\subset \R$ this convergence is uniform in $s\in [a,b]$.
\end{proposition}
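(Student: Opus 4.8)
The plan is to decompose the time interval $\{1,\dots,n\}$ into maximal blocks on which $\bc$ is constant, estimate $\log Z_{c_1\dots c_n}(s)$ block by block, and then control the error terms using the rare-switching hypothesis $\kappa_n=o(n)$. Write $\{1,\dots,n\} = B_1 \sqcup B_2 \sqcup \dots \sqcup B_m$, where each $B_\ell$ is a maximal run of equal indices, say with value $j_\ell$ and length $\ell_\ell = |B_\ell|$; note $m \le \kappa_n + 1$. The key structural fact is that the partition function factors \emph{approximately} over blocks. Indeed, because every diffeomorphism involved has bounded distortion (Lemma~\ref{l: bounded distortion}), for any concatenation of words the length $|I^{c_1\dots c_n}_{w_1\dots w_n}|$ is comparable, up to a multiplicative constant $e^{\pm C_{\Dist}}$ depending only on $\mF$, to the product $\prod_\ell |I^{\text{block }\ell}_{\text{sub-word }\ell}|$ of the lengths of the pieces cut out within each block (each piece being a cylinder interval for the \emph{stationary} cookie-cutter $F_{j_\ell}$, rescaled to the unit interval). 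Summing over all words and taking logarithms turns this into
\begin{equation}\label{eq:block-decomp}
    \Big| \log Z_{c_1\dots c_n}(s) - \sum_{\ell=1}^m \log Z_{\ell_\ell, F_{j_\ell}}(s) \Big| \le m \cdot C(s),
\end{equation}
where $Z_{N,F_j}(s)$ is the stationary partition function of $N$ iterates of $F_j$ and $C(s)$ is a constant that is locally bounded in $s$ (it absorbs $C_{\Dist}$ and the $s$-dependent exponent; on a compact $[a,b]$ it is uniformly bounded). I would make \eqref{eq:block-decomp} precise by an induction on the number of blocks, peeling off one block at a time and using the distortion bound to compare the length of a two-level cylinder to the product of the two one-level lengths.

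Next I would replace each stationary block sum $\log Z_{\ell_\ell, F_{j_\ell}}(s)$ by $\ell_\ell \cdot P_{F_{j_\ell}}(s)$. For a single stationary cookie-cutter $F$, the slope estimate \eqref{eq:slope} (applied to the stationary system) together with submultiplicativity/supermultiplicativity of $N \mapsto Z_{N,F}(s)$ up to the bounded-distortion constant gives a bound of the form $\big| \tfrac{1}{N}\log Z_{N,F}(s) - P_F(s) \big| \le C'/N$ uniformly on compacts, i.e. $\big| \log Z_{N,F}(s) - N P_F(s) \big| \le C'$ with $C'$ independent of $N$. Applying this in each block and summing,
\begin{equation}\label{eq:block-to-pressure}
    \Big| \sum_{\ell=1}^m \log Z_{\ell_\ell, F_{j_\ell}}(s) - \sum_{\ell=1}^m \ell_\ell \, P_{F_{j_\ell}}(s) \Big| \le m \cdot C'.
\end{equation}
Finally, grouping the blocks by their common index value $j$, the sum $\sum_\ell \ell_\ell P_{F_{j_\ell}}(s)$ is exactly $\sum_{j=1}^k \big(\#\{i \le n : c_i = j\}\big) P_{F_j}(s) = n \sum_j \Freq_{n,j} P_{F_j}(s)$. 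Combining \eqref{eq:block-decomp} and \eqref{eq:block-to-pressure}, dividing by $n$, and using $m \le \kappa_n + 1$:
\begin{equation}\label{eq:final-bound}
    \Big| P_{c_1\dots c_n}(s) - \sum_{j=1}^k \Freq_{n,j} P_{F_j}(s) \Big| \le \frac{(\kappa_n+1)\big(C(s)+C'\big)}{n}.
\end{equation}
Since $C(s)+C'$ is bounded uniformly for $s \in [a,b]$ and $\kappa_n = o(n)$, the right-hand side tends to $0$ uniformly on $[a,b]$, which is the claim.

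The main obstacle I anticipate is making the block-factorization estimate \eqref{eq:block-decomp} genuinely uniform with a clean constant, because the intervals cut out inside block $\ell$ are not cylinders of $F_{j_\ell}$ applied to $I$ but to the (rescaled) sub-interval carved out by the preceding blocks; one must verify that passing to that sub-interval and renormalizing only costs the bounded-distortion factor $e^{\pm C_{\Dist}}$ per block and does \emph{not} accumulate. This is exactly what Lemma~\ref{l: bounded distortion} is designed to handle — the distortion of the \emph{whole} composition up to step $n$ is bounded by a single constant, independent of $n$ — so the per-block errors in \eqref{eq:block-decomp} should in fact telescope into a single $O(1)$ term rather than $m \cdot C(s)$; I would first try for that sharper bound and fall back on the cruder $m\cdot C(s)$ version if the bookkeeping gets unwieldy, since the cruder bound already suffices given $\kappa_n = o(n)$. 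A secondary point to be careful about is the uniformity in $s$: the constant $C(s)$ contains a factor like $(\log L)$ times a bound on $|s|$ coming from comparing $|I_w|^s$ across the distortion estimate, so restricting to a compact $[a,b]$ is essential and should be invoked explicitly.
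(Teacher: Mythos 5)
Your proof is correct and takes essentially the same approach as the paper: you cut the word into maximal constant blocks, iterate the quasi-additivity of $\log Z$ (Lemma~\ref{l: almost sub-additive}) to peel off the blocks, replace each stationary block by $\ell_\ell\,P_{F_{j_\ell}}(s)$ via the same almost-additivity estimate, and absorb all $O(\kappa_n)$ errors using $\kappa_n = o(n)$. The only cosmetic difference is that the paper first merges blocks with the same symbol into a single $Z_{(j)^{n_j}}$ (paying another $\kappa_n C_{\Dist}|s|$) and then invokes the pressure limit only $k$ times, whereas you invoke it once per block (paying $m\cdot C'$ instead) — both give the same $o(n)$ bound.
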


This proposition follows from the quasi-additivity of the log-partition functions: 

\begin{lemma}\label{l: almost sub-additive}
For any $s$ and any $c_{1},\dots, c_{n+m}$, one has
    \begin{equation}
        \left| \log Z_{c_1\dots c_{n+m}}(s)-\left(\log Z_{c_1\dots c_n}(s)+\log Z_{c_{n+1}\dots c_{n+m}}(s)\right)\right|\le C_{\Dist} \cdot |s|,
    \end{equation}
    where $C_{\Dist}$ is the constant from Lemma~\ref{l: bounded distortion}. 
\end{lemma}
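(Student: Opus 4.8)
The plan is to prove the quasi-multiplicativity bound on the partition functions directly from the bounded distortion estimate of Lemma~\ref{l: bounded distortion}, by comparing, term by term, the intervals appearing in $Z_{c_1\dots c_{n+m}}$ with products of intervals from the two shorter partition functions. The combinatorial identity underlying this is that a word $w\in\Omega_{c_1\dots c_{n+m}}$ decomposes uniquely as a concatenation $w=w'w''$ with $w'=w_1\dots w_n\in\Omega_{c_1\dots c_n}$ and $w''=w_{n+1}\dots w_{n+m}\in\Omega_{c_{n+1}\dots c_{n+m}}$, so that the index set of the longer partition function is exactly the Cartesian product of the two shorter ones.

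First I would establish the key geometric estimate: for such a decomposition $w=w'w''$, the interval $I^{c_1\dots c_{n+m}}_{w}$ equals $f^{c_1\dots c_n}_{w'}\!\left(I^{c_{n+1}\dots c_{n+m}}_{w''}\right)$, since $f^{c_1\dots c_{n+m}}_{w} = f^{c_1\dots c_n}_{w'}\circ f^{c_{n+1}\dots c_{n+m}}_{w''}$ by the very definition of these compositions. Applying the mean value theorem together with the bounded distortion estimate $\Dist(f^{c_1\dots c_n}_{w'};I)\le C_{\Dist}$ from Lemma~\ref{l: bounded distortion}, the ratio of the length $|I^{c_1\dots c_{n+m}}_{w}|$ to $|I^{c_1\dots c_n}_{w'}|\cdot|I^{c_{n+1}\dots c_{n+m}}_{w''}|$ (note $|I^{c_1\dots c_n}_{w'}|=|f^{c_1\dots c_n}_{w'}(I)|$ since $I=[0,1]$ has unit length) is pinched between $e^{-C_{\Dist}}$ and $e^{C_{\Dist}}$. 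Raising to the power $s$, the same ratio for the $s$-th powers lies between $e^{-C_{\Dist}|s|}$ and $e^{C_{\Dist}|s|}$.

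Next I would sum this pinching estimate over all $w\in\Omega_{c_1\dots c_{n+m}}$. Using the product structure of the index set, $\sum_{w'w''} |I^{c_1\dots c_n}_{w'}|^s |I^{c_{n+1}\dots c_{n+m}}_{w''}|^s = \left(\sum_{w'}|I^{c_1\dots c_n}_{w'}|^s\right)\left(\sum_{w''}|I^{c_{n+1}\dots c_{n+m}}_{w''}|^s\right) = Z_{c_1\dots c_n}(s)\cdot Z_{c_{n+1}\dots c_{n+m}}(s)$. Combining with the term-by-term bound gives
\[
e^{-C_{\Dist}|s|}\, Z_{c_1\dots c_n}(s)\, Z_{c_{n+1}\dots c_{n+m}}(s) \;\le\; Z_{c_1\dots c_{n+m}}(s) \;\le\; e^{C_{\Dist}|s|}\, Z_{c_1\dots c_n}(s)\, Z_{c_{n+1}\dots c_{n+m}}(s),
\]
and taking logarithms yields exactly the claimed inequality.

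I do not anticipate a serious obstacle here; the statement is essentially a bookkeeping exercise once the bounded distortion lemma is in hand. The one point requiring a little care is making sure the distortion bound is applied to the correct composition: the factor $f^{c_1\dots c_n}_{w'}$ is the "outer" map, and Lemma~\ref{l: bounded distortion} controls exactly its distortion on $I$, which is what is needed since its argument $I^{c_{n+1}\dots c_{n+m}}_{w''}$ is a subinterval of $I$. One should also note that the estimate is uniform in $s$ only in the sense that the constant is $C_{\Dist}|s|$; there is no issue for the downstream application since Proposition~\ref{p:combine} restricts to compact intervals of $s$, where $|s|$ is bounded.
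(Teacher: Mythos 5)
Your proof is correct and follows essentially the same route as the paper's: decompose $w=w'w''$, use the mean value theorem together with the bounded distortion estimate of Lemma~\ref{l: bounded distortion} (and $|I|=1$) to pinch the ratio $|I_w|/\bigl(|I_{w'}|\,|I_{w''}|\bigr)$ between $e^{\pm C_{\Dist}}$, raise to the $s$-th power, sum using the product structure of the index set, and take logarithms. The only cosmetic difference is that you write the pinching bound directly with $e^{\pm C_{\Dist}|s|}$, whereas the paper states it for $s\ge 0$ and notes the inequalities reverse for $s<0$; the content is identical.
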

\begin{proof}
    We have 
    \begin{equation}\label{eq:lenghts}
        I_{w_1\dots w_{n+m}}^{c_1\dots c_{n+m}}=f_{w_1\dots w_n}^{c_1\dots c_{n}} (I_{w_{n+1}\dots w_{n+m}}^{c_{n+1}\dots c_{n+m}}), \quad
        I_{w_1\dots w_{n}}^{c_1\dots c_{n}} = f_{w_1\dots w_n}^{c_1\dots c_{n}} (I).        
    \end{equation}
    Due to the mean value theorem, for a diffeomorphism $f:I'\to I''$ one has $|I''|=|Df(x)| \cdot |I'|$ for some $x\in I'$. Applying it to both images in~\eqref{eq:lenghts}, we get 
    \[
        |I_{w_1\dots w_{n+m}}^{c_1\dots c_{n+m}}| = |Df_{w_1\dots w_n}^{c_1\dots c_{n}} (x)| \cdot |I_{w_{n+1}\dots w_{n+m}}^{c_{n+1}\dots c_{n+m}}|,
    \]
    \[
        |I_{w_1\dots w_{n}}^{c_1\dots c_{n}}| = |Df_{w_1\dots w_n}^{c_1\dots c_{n}} (y)| \cdot |I|
    \]
    for some $x,y\in I$. Now, the quotient of derivatives of $f_{w_1\dots w_n}^{c_1\dots c_{n}}$ at any two points $x,y\in I$ doesn't exceed $e^{C_{\Dist}}$ due to Lemma~\ref{l: bounded distortion}, and as $|I|=1$, we have 
    \[
        e^{-C_{\Dist}} < \frac{|I_{w_1\dots w_{n+m}}^{c_1\dots c_{n+m}}|}{|I_{w_1\dots w_{n}}^{c_1\dots c_{n}}|\cdot |I_{w_{n+1}\dots w_{n+m}}^{c_{n+1}\dots c_{n+m}}|} < e^{C_{\Dist}}.
    \]
    Taking $s$-th power, multiplying by the denominator and summing over all the words $w$, we get 
    \begin{multline*}
        e^{-sC_{\Dist}} Z_{c_1\dots c_n}(s) \cdot Z_{c_{n+1}\dots c_{n+m}}(s) \le 
        Z_{c_1\dots c_{n+m}}(s) \\ \le 
        e^{sC_{\Dist}} Z_{c_1\dots c_n}(s) \cdot Z_{c_{n+1}\dots c_{n+m}}(s)
    \end{multline*}
    for $s\ge 0$, and the reversed inequalities for $s<0$. An application of the logarithm completes the proof.
\end{proof}

\begin{proof}[Proof of Proposition~\ref{p:combine}]
    Given any sequence $c_1,\dots,c_n$, cut it into consecutive groups of identical letters: let 
    \[
        c_1\dots c_n = (j_1)^{l_1} \dots (j_{\kappa_n+1})^{l_{\kappa_n+1}},
    \]
    where $j_1,\dots,j_{\kappa_n+1}\in \{1,\dots,k\}$ are symbols, and powers signify number of repetitions. Applying consecutively Lemma~\ref{l: almost sub-additive}, we get
    \begin{equation}\label{eq:c-cut}
        \left| \log Z_{c_1\dots c_n}(s) - \sum_i \log Z_{(j_i)^{l_i}}(s) \right| \le \kappa_n \cdot C_{\Dist}|s|.        
    \end{equation}
    Now, let us group the identical letters together: let 
    \[
        n_j:=\#\{m\le n \mid c_m=j\} = \sum_{i: \, j_i=j} l_i,
    \]
    then 
    \begin{equation}\label{eq:c-glue}
        \left| \sum_i \log Z_{(j_i)^{l_i}}(s) - \sum_{j=1}^k \log Z_{(j)^{n_j}}(s) \right| \le \kappa_n \cdot C_{\Dist}|s|.        
    \end{equation}
    Finally, note that for every $j$, $l$, $m$, one has 
    \[
        \left| m\log Z_{(j)^l} (s) - \log Z_{(j)^{ml}} (s) \right| \le m\cdot C_{\Dist}|s|;
    \]
    dividing by $m$ and passing to the limit as $m\to\infty$ implies that 
    \[
        \left| \log Z_{(j)^l} (s) - l \cdot P_{F_j}(s) \right| \le C_{\Dist}|s|.
    \]
    Joining this with~\eqref{eq:c-cut} and~\eqref{eq:c-glue}, we see that 
    \[
        \left| \log Z_{c_1\dots c_n}(s) - \sum_{j=1}^k n_j P_{F_j}(s) \right| \le (2\kappa_n+k) \cdot C_{\Dist}|s|.        
    \]
    Dividing by $n$ and using the rarely switching assumption $\kappa_n=o(n)$ completes the proof.
\end{proof}

\subsection{Estimating the dimension}

\begin{proof}[Proof of Theorem~\ref{t:main}]
We are going to use the result from~\cite{MRW2001}, Theorem~\ref{t:dimensions-roots} cited earlier. Namely, the Hausdorff and upper box-counting dimensions are, respectively, the lower and upper limits of the zeros~$s_{c_1\dots c_n}$ of the pressure functions $P_{c_1\dots c_n}(s)$ (see~\eqref{eq:dim-formulae}).

Now, note that due to the uniform lower bound~\eqref{eq:slope} for the slope of $P_{c_1\dots c_n}(s)$, for every $s', s''$, we have 
\[
|P_{c_1\dots c_n}(s') - P_{c_1\dots c_n}(s'')| \ge \log \lambda \cdot |s' - s''|
\]
and hence
\[
|s_{c_1\dots c_n} - s'| \le \frac{1}{\log \lambda} |P_{c_1\dots c_n}(s')|.
\]

Now, for every $n$, let $s'_{c_1\dots c_n}$ be the root of the linear combination of pressure functions 
\[
    \sum_{j=1}^k  \Freq_{n,j} \cdot P_{F_j}(s).
\]
Then Proposition~\ref{p:combine} implies that the value $P_{c_1\dots c_n}(s'_{c_1\dots c_n})$ tends to zero, and hence the same applies to the difference
$|s_{c_1\dots c_n}-s'_{c_1\dots c_n}|$.
Applying Theorem~\ref{t:dimensions-roots} immediately leads to
\begin{equation}\label{eq:dim-s-prim}
    \dimH \Cantor_{\bc} = \liminf_{n\to\infty} s'_{c_1\dots c_n}, \quad 
    \dimBu \Cantor_{\bc} = \limsup_{n\to\infty} s'_{c_1\dots c_n}.
\end{equation}
Now, on one hand, due to the monotonicity of each of the pressure functions $P_{F_j}(s)$, for every $n$, the root $s'_{c_1\dots c_n}$ is comprised between the minimum and the maximum of the roots of these functions, that are exactly the minimum and the maximum of the dimensions of the sets~$\Cantor(F_j)$. Hence, 
\begin{equation}\label{eq:inequalities}    
\liminf_{n\to\infty} s'_{c_1\dots c_n} \ge \min_j \dim \Cantor(F_j), \quad \limsup_{n\to\infty} s'_{c_1\dots c_n} \le \max_j \dim \Cantor(F_j).
\end{equation}
On the other hand, if for some $n,j_0,\eps$ we have $\Freq_{n,j_0}\ge 1-\eps$, then for $s'=\dim \Cantor(F_{j_0})$ one has
\[
    \left|\sum_{j=1}^k  \Freq_{n,j} \cdot P_{F_j}(s)\right| \le \eps\cdot \max_{j} |P_{F_j}(s')|;
\]
taking $M:=\max_{s\in [0,1]} \max_j |P_{F_j}(s)|$, we thus have 
\[
|s_{c_1\dots c_n} - s'| \le \frac{M}{\log \lambda} \, \eps,
\]
and the frequencies condition then immediately implies that the inequalities in~\eqref{eq:inequalities} are actually equalities. Together with~\eqref{eq:dim-s-prim} this concludes the proof of Theorem~\ref{t:main}.
\end{proof}

We conclude this section by noticing that Theorem~\ref{t:main} can be stated in a way that does not require the frequency conditions. Namely, consider the simplex 
\[
\Delta:=\{(\alpha_1,\dots,\alpha_k)\mid \forall j \quad \alpha_j\ge 0, \quad \sum_j \alpha_j=1\}
\]
and a map $\Root:\Delta\to [0,1]$ that associates to $(\alpha_1,\dots,\alpha_k)$ the root of the corresponding linear combination of pressure functions:
\[
    \Root(\alpha_1,\dots,\alpha_k)=s: \quad \sum_j \alpha_j P_{F_j}(s)=0.
\]
This map is continuous due to the same slope arguments as before, and we have 
\[
    s'_{c_1\dots c_n} = \Root(\Freq_{n,1},\dots,\Freq_{n,k}).
\]
Hence, the following statement holds.
\begin{proposition}\label{p:frequencies}
    Assume that all the assumptions of Theorem~\ref{t:main} hold, except possibly for the frequency assumptions. Then the dimensions are given by the formulae
    \[
        \dimH \Cantor_{\bc} = \liminf_{n\to\infty} \, \Root(\Freq_{n,1},\dots,\Freq_{n,k}),
    \]
    \[
        \dimBu \Cantor_{\bc} = \limsup_{n\to\infty} \, \Root(\Freq_{n,1},\dots,\Freq_{n,k}).
    \]
\end{proposition}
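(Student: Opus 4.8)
\textbf{Proof plan for Proposition~\ref{p:frequencies}.}

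The plan is to run exactly the same argument as in the proof of Theorem~\ref{t:main}, but to stop before the step where the frequencies condition was invoked. First I would recall that Proposition~\ref{p:combine} gives, uniformly on the compact interval $[0,1]$ (which contains all the relevant roots, by the remark after~\eqref{eq:slope}), that
\[
    P_{c_1\dots c_n}(s) - \sum_{j=1}^k \Freq_{n,j}\, P_{F_j}(s) \longrightarrow 0.
\]
Combining this with the uniform slope bound~\eqref{eq:slope}, namely $|P_{c_1\dots c_n}(s')-P_{c_1\dots c_n}(s'')|\ge (\log\lambda)\,|s'-s''|$, which forces the root $s_{c_1\dots c_n}$ of $P_{c_1\dots c_n}$ to satisfy $|s_{c_1\dots c_n}-s'|\le \tfrac{1}{\log\lambda}|P_{c_1\dots c_n}(s')|$ for every $s'$, I would take $s' = s'_{c_1\dots c_n} := \Root(\Freq_{n,1},\dots,\Freq_{n,k})$, the root of the convex combination $\sum_j \Freq_{n,j}P_{F_j}$. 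Since $P_{c_1\dots c_n}(s'_{c_1\dots c_n})\to 0$ by Proposition~\ref{p:combine}, this yields $|s_{c_1\dots c_n}-s'_{c_1\dots c_n}|\to 0$.

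Having this, I would invoke Theorem~\ref{t:dimensions-roots} of~\cite{MRW2001}: the Hausdorff and upper box-counting dimensions of $\Cantor_{\bc}$ are, respectively, $\liminf_n s_{c_1\dots c_n}$ and $\limsup_n s_{c_1\dots c_n}$. Because $|s_{c_1\dots c_n}-s'_{c_1\dots c_n}|\to 0$, the lower and upper limits are unchanged when $s_{c_1\dots c_n}$ is replaced by $s'_{c_1\dots c_n}=\Root(\Freq_{n,1},\dots,\Freq_{n,k})$, which is exactly the claimed formula. The only remaining point is to verify that $\Root$ is well-defined and continuous on $\Delta$: for each $(\alpha_1,\dots,\alpha_k)\in\Delta$ the function $s\mapsto \sum_j\alpha_j P_{F_j}(s)$ inherits from~\eqref{eq:slope} the strict decrease with slope bounded away from zero (and, being a finite convex combination of continuous functions on $[0,1]$, it is continuous and changes sign on $[0,1]$), so it has a unique zero; continuity of the root in $(\alpha_1,\dots,\alpha_k)$ then follows from the implicit-function-type estimate $|\Root(\alpha)-\Root(\beta)|\le \tfrac{1}{\log\lambda}\sup_{s\in[0,1]}|\sum_j(\alpha_j-\beta_j)P_{F_j}(s)|$, which is linear in $\|\alpha-\beta\|$.

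There is essentially no serious obstacle here — the statement is a packaging of the machinery already assembled for Theorem~\ref{t:main}. The one place that needs a little care, rather than difficulty, is making sure the convergence in Proposition~\ref{p:combine} is used on a fixed compact interval independent of $n$ (so that it is genuinely uniform and can be evaluated at the moving point $s'_{c_1\dots c_n}$); this is handled by the a priori localization of all roots $s_{c_1\dots c_n}$ and $s'_{c_1\dots c_n}$ inside $[0,1]$, which follows from the sign of the pressure at $s=0$ and $s=1$ together with the slope bound~\eqref{eq:slope}. Once that is noted, the proof is a two-line deduction.
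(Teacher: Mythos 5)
Your proposal is correct and matches the paper's argument essentially line for line. The paper derives exactly the display~\eqref{eq:dim-s-prim}, namely $\dimH \Cantor_{\bc}=\liminf_n s'_{c_1\dots c_n}$ and $\dimBu \Cantor_{\bc}=\limsup_n s'_{c_1\dots c_n}$, using only Proposition~\ref{p:combine} and Theorem~\ref{t:dimensions-roots} (both independent of the frequencies condition), then observes $s'_{c_1\dots c_n}=\Root(\Freq_{n,1},\dots,\Freq_{n,k})$ and that $\Root$ is well-defined by the same slope estimate~\eqref{eq:slope}; the frequencies condition only enters afterwards to turn the inequalities~\eqref{eq:inequalities} into equalities. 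Your additional remarks (a priori localization of all roots in $[0,1]$ so the uniform convergence applies, and continuity of $\Root$) are consistent with the paper's one-line justification and are in fact only needed for the closing sentence about dependence on accumulation points in $\Delta$, not for the proposition proper.
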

In particular, for given cookie-cutter maps $F_1,\dots, F_k$, assuming that the sequence $(c_n)$ is rarely switching, the corresponding dimensions depend only on the set of accumulation points of the vector of frequencies in the simplex~$\Delta$.

\section{Lower Hausdorff dimension example}\label{s:lower}

This section is devoted to the construction of an example, showing that the assumption of rare switching in Theorem~\ref{t:main} cannot be omitted. We start with the following construction.

\begin{example}\label{ex:F0-F1}
    For every $\eps>0$, consider cookie-cutter maps $F_0,F_1$, where 
    \[
        F_0(x) = \begin{cases}
            f_0^{-1}(x), & x\in [0,\frac{1}{20}]\\
            f_1^{-1}(x), & x\in [\frac{19}{20},1],
        \end{cases}, \qquad F_1(x) = 1-F_0(1-x),
    \]
    the map $f_1^{-1}(x)=20x-19$ is the affine map from the interval $[\frac{19}{20},1]$ to $[0,1]$,
    and $g(x)=f_0^{-1}(x)$ is a M\"obius map, defined by 
    \[
        g(0)=0,\quad g'(0)=1+\eps, \quad g(\frac{1}{20})=1.
    \]
    Then, once $\eps>0$ is sufficiently small, we have 
    \[
        \dim \Cantor(F_0 F_1) < \frac{1}{2.01}
        < \dim \Cantor(F_0)=\dim \Cantor(F_1).
    \]
\end{example}

\begin{figure}
    \centering
    \includegraphics[width=0.35\linewidth]{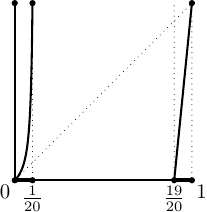} \qquad \qquad 
    \includegraphics[width=0.35\linewidth]{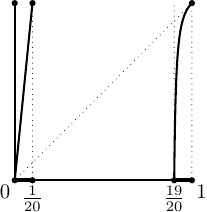} 
    \caption{Graphs of $F_0$,$F_1$}
    \label{fig:ex2}
\end{figure}

\begin{proof}
    As $\eps\to 0$, the lower limit of the dimension of the set $\Cantor(F_0)$ is at least~$1/2$. Indeed, for any fixed $M$ the same Cantor set is generated also by 
    \[
    f_1, f_0f_1, f_0^2 f_1,\dots, f_0^{M-1}f_1, f_0^{M}.
    \]
    However, for every $M$, for all $\eps$ sufficiently close to $0$, the derivatives of these maps satisfy a lower bound $Df>\frac{1}{500 M^2}$, thus implying a lower bound for the Hausdorff dimension:
    \[
        \dimH \Cantor(F_0) \ge \frac{\log M}{\log 500M^2} \to \frac{1}{2}.
    \]
    On the other hand, the IFS corresponding to~$F_0 F_1$ consists of four maps, and it is not difficult to check that each of these maps contracts at every point of $[0,\frac{1}{20}]\cup [\frac{19}{20},1]$ at least $20$ times, thus 
    \[
    \dim J(F_0 F_1)< \frac{\log 4}{\log 20} <\frac{1}{2.01}.
    \]
\end{proof}

\begin{example}\label{ex:HD-small}
    Take the sequence $\bc=(c_n)$ to be formed by consecutive groups of $00$'s, $11$'s and $01$'s, 
    \[
      \bc = (00)^{l_1} (11)^{l_2} (01)^{l_3} \dots 
      (00)^{l_{3m+1}} (11)^{l_{3m+2}} (01)^{l_{3m+3}} \dots
    \]
    (where the powers represent the number of repetitions and the products represent the concatenation of words).
    Assume that the lengths $l_j$ of these groups satisfy $\frac{l_{j+1}}{l_j} \to \infty$. Take any $\eps>0$ satisfying the conclusion of Example~\ref{ex:F0-F1}. Then the Hausdorff dimension of the corresponding cookie-cutter-like set satisfies
    \[
        \dim_H \Cantor_{\bc} = \dim_H \Cantor(F_0 F_1) < \dim_H \Cantor(F_0)=\dim_H \Cantor(F_1);
    \]
    in particular, the conclusion~\eqref{eq:dimH-min} of Theorem~\ref{t:main} is not satisfied for this set.
\end{example}

\begin{proof}
    It suffices to apply Theorem~\ref{t:main} for couples of consecutive letters: the system then satisfies both the rarely switching condition and the frequencies condition due to the assumption on the lengths~$l_j$. 
\end{proof}

\end{document}